\newtheorem{theorem}{Theorem}
\newtheorem{axiom}[theorem]{Axiom}
\newtheorem{conjecture}[theorem]{Conjecture}
\newtheorem{corollary}[theorem]{Corollary}
\newtheorem{definition}[theorem]{Definition}
\newtheorem{example}[theorem]{Example}
\newtheorem{exercise}[theorem]{Exercise}
\newtheorem{lemma}[theorem]{Lemma}
\newtheorem{proposition}[theorem]{Proposition}
\newtheorem{remark}[theorem]{Remark}
\newenvironment{proof}[1][Proof]{\noindent\textbf{#1.} }{\ \rule{0.5em}{0.5em}}
\let\pdfoutput=\undefined\fi
\chardef\@x10\chardef\@xv60
\def\tcitime{
\def\@time{%
  \@minute\time\@hour\@minute\divide\@hour\@xv
  \ifnum\@hour<\@x 0\fi\the\@hour:%
  \multiply\@hour\@xv\advance\@minute-\@hour
  \ifnum\@minute<\@x 0\fi\the\@minute
  }}%
\def\x@hyperref#1#2#3{%
   % Turn off various catcodes before reading parameter 4
   \catcode`\~ = 12
   \catcode`\$ = 12
   \catcode`\_ = 12
   \catcode`\# = 12
   \catcode`\& = 12
   \catcode`\% = 12
   \y@hyperref{#1}{#2}{#3}%
}
\def\y@hyperref#1#2#3#4{%
   #2\ref{#4}#3
   \catcode`\~ = 13
   \catcode`\$ = 3
   \catcode`\_ = 8
   \catcode`\# = 6
   \catcode`\& = 4
   \catcode`\% = 14
}
\def\QCTOpt[#1]#2{%
  \def\QCTOptB{#1}
  \def\QCTOptA{#2}
}
\def\QCTNOpt#1{%
  \def\QCTOptA{#1}
  \let\QCTOptB\empty
}
\def\Qct{%
  \@ifnextchar[{%
    \QCTOpt}{\QCTNOpt}
}
\def\QCBOpt[#1]#2{%
  \def\QCBOptB{#1}%
  \def\QCBOptA{#2}%
}
\def\QCBNOpt#1{%
  \def\QCBOptA{#1}%
  \let\QCBOptB\empty
}
\def\Qcb{%
  \@ifnextchar[{%
    \QCBOpt}{\QCBNOpt}%
}
\def\PrepCapArgs{%
  \ifx\QCBOptA\empty
    \ifx\QCTOptA\empty
      {}%
    \else
      \ifx\QCTOptB\empty
        {\QCTOptA}%
      \else
        [\QCTOptB]{\QCTOptA}%
      \fi
    \fi
  \else
    \ifx\QCBOptA\empty
      {}%
    \else
      \ifx\QCBOptB\empty
        {\QCBOptA}%
      \else
        [\QCBOptB]{\QCBOptA}%
      \fi
    \fi
  \fi
}
\def\GRAPHICSPS#1{%
 \ifcase\GRAPHICSTYPE%\GRAPHICSTYPE=0
   \special{ps: #1}%
 \or%\GRAPHICSTYPE=1
   \special{language "PS", include "#1"}%
%%%\or%\GRAPHICSTYPE=2
%%%  #1%
 \fi
}%
\def\graffile#1#2#3#4{%
%%% \ifnum\GRAPHICSTYPE=\tw@
%%%  %Following if using psfig
%%%  \@ifundefined{psfig}{\input psfig.tex}{}%
%%%  \psfig{file=#1, height=#3, width=#2}%
%%% \else
  %Following for all others
  % JCS - added BOXTHEFRAME, see below
    \bgroup
	   \@inlabelfalse
       \leavevmode
       \@ifundefined{bbl@deactivate}{\def~{\string~}}{\activesoff}%
        \raise -#4 \BOXTHEFRAME{%
           \hbox to #2{\raise #3\hbox to #2{\null #1\hfil}}}%
    \egroup
}%
\def\draftbox#1#2#3#4{%
 \leavevmode\raise -#4 \hbox{%
  \frame{\rlap{\protect\tiny #1}\hbox to #2%
   {\vrule height#3 width\z@ depth\z@\hfil}%
  }%
 }%
}%
\let\nographics=\@msidraft
\newif\ifwasdraft
\def\GRAPHIC#1#2#3#4#5{%
   \ifnum\@msidraft=\@ne\draftbox{#2}{#3}{#4}{#5}%
   \else\graffile{#1}{#3}{#4}{#5}%
   \fi
}
\def\addtoLaTeXparams#1{%
    \edef\LaTeXparams{\LaTeXparams #1}}%
\newif\ifBoxFrame \BoxFramefalse
\newif\ifOverFrame \OverFramefalse
\newif\ifUnderFrame \UnderFramefalse
\def\BOXTHEFRAME#1{%
   \hbox{%
      \ifBoxFrame
         \frame{#1}%
      \else
         {#1}%
      \fi
   }%
}
\def\doFRAMEparams#1{\BoxFramefalse\OverFramefalse\UnderFramefalse\readFRAMEparams#1\end}%
\def\readFRAMEparams#1{%
 \ifx#1\end%
  \let\next=\relax
  \else
  \ifx#1i\dispkind=\z@\fi
  \ifx#1d\dispkind=\@ne\fi
  \ifx#1f\dispkind=\tw@\fi
  \ifx#1t\addtoLaTeXparams{t}\fi
  \ifx#1b\addtoLaTeXparams{b}\fi
  \ifx#1p\addtoLaTeXparams{p}\fi
  \ifx#1h\addtoLaTeXparams{h}\fi
  \ifx#1X\BoxFrametrue\fi
  \ifx#1O\OverFrametrue\fi
  \ifx#1U\UnderFrametrue\fi
  \ifx#1w
    \ifnum\@msidraft=1\wasdrafttrue\else\wasdraftfalse\fi
    \@msidraft=\@ne
  \fi
  \let\next=\readFRAMEparams
  \fi
 \next
 }%
\def\IFRAME#1#2#3#4#5#6{%
      \bgroup
      \let\QCTOptA\empty
      \let\QCTOptB\empty
      \let\QCBOptA\empty
      \let\QCBOptB\empty
      #6%
      \parindent=0pt
      \leftskip=0pt
      \rightskip=0pt
      \setbox0=\hbox{\QCBOptA}%
      \@tempdima=#1\relax
      \ifOverFrame
          % Do this later
          \typeout{This is not implemented yet}%
          \show\HELP
      \else
         \ifdim\wd0>\@tempdima
            \advance\@tempdima by \@tempdima
            \ifdim\wd0 >\@tempdima
               \setbox1 =\vbox{%
                  \unskip\hbox to \@tempdima{\hfill\GRAPHIC{#5}{#4}{#1}{#2}{#3}\hfill}%
                  \unskip\hbox to \@tempdima{\parbox[b]{\@tempdima}{\QCBOptA}}%
               }%
               \wd1=\@tempdima
            \else
               \textwidth=\wd0
               \setbox1 =\vbox{%
                 \noindent\hbox to \wd0{\hfill\GRAPHIC{#5}{#4}{#1}{#2}{#3}\hfill}\\%
                 \noindent\hbox{\QCBOptA}%
               }%
               \wd1=\wd0
            \fi
         \else
            \ifdim\wd0>0pt
              \hsize=\@tempdima
              \setbox1=\vbox{%
                \unskip\GRAPHIC{#5}{#4}{#1}{#2}{0pt}%
                \break
                \unskip\hbox to \@tempdima{\hfill \QCBOptA\hfill}%
              }%
              \wd1=\@tempdima
           \else
              \hsize=\@tempdima
              \setbox1=\vbox{%
                \unskip\GRAPHIC{#5}{#4}{#1}{#2}{0pt}%
              }%
              \wd1=\@tempdima
           \fi
         \fi
         \@tempdimb=\ht1
         %\advance\@tempdimb by \dp1
         \advance\@tempdimb by -#2
         \advance\@tempdimb by #3
         \leavevmode
         \raise -\@tempdimb \hbox{\box1}%
      \fi
      \egroup%
}%
\def\DFRAME#1#2#3#4#5{%
  \vspace\topsep
  \hfil\break
  \bgroup
     \leftskip\@flushglue
	 \rightskip\@flushglue
	 \parindent\z@
	 \parfillskip\z@skip
     \let\QCTOptA\empty
     \let\QCTOptB\empty
     \let\QCBOptA\empty
     \let\QCBOptB\empty
	 \vbox\bgroup
        \ifOverFrame 
           #5\QCTOptA\par
        \fi
        \GRAPHIC{#4}{#3}{#1}{#2}{\z@}%
        \ifUnderFrame 
           \break#5\QCBOptA
        \fi
	 \egroup
  \egroup
  \vspace\topsep
  \break
}%
\def\FFRAME#1#2#3#4#5#6#7{%
 %If float.sty loaded and float option is 'h', change to 'H'  (gp) 1998/09/05
  \@ifundefined{floatstyle}
    {%floatstyle undefined (and float.sty not present), no change
     \begin{figure}[#1]%
    }
    {%floatstyle DEFINED
	 \ifx#1h%Only the h parameter, change to H
      \begin{figure}[H]%
	 \else
      \begin{figure}[#1]%
	 \fi
	}
  \let\QCTOptA\empty
  \let\QCTOptB\empty
  \let\QCBOptA\empty
  \let\QCBOptB\empty
  \ifOverFrame
    #4
    \ifx\QCTOptA\empty
    \else
      \ifx\QCTOptB\empty
        \caption{\QCTOptA}%
      \else
        \caption[\QCTOptB]{\QCTOptA}%
      \fi
    \fi
    \ifUnderFrame\else
      \label{#5}%
    \fi
  \else
    \UnderFrametrue%
  \fi
  \begin{center}\GRAPHIC{#7}{#6}{#2}{#3}{\z@}\end{center}%
  \ifUnderFrame
    #4
    \ifx\QCBOptA\empty
      \caption{}%
    \else
      \ifx\QCBOptB\empty
        \caption{\QCBOptA}%
      \else
        \caption[\QCBOptB]{\QCBOptA}%
      \fi
    \fi
    \label{#5}%
  \fi
  \end{figure}%
 }%
\def\makeactives{
  \catcode`\"=\active
  \catcode`\;=\active
  \catcode`\:=\active
  \catcode`\'=\active
  \catcode`\~=\active
}
   \gdef\activesoff{%
      \def"{\string"}%
      \def;{\string;}%
      \def:{\string:}%
      \def'{\string'}%
      \def~{\string~}%
      %\bbl@deactivate{"}%
      %\bbl@deactivate{;}%
      %\bbl@deactivate{:}%
      %\bbl@deactivate{'}%
    }
\def\FRAME#1#2#3#4#5#6#7#8{%
 \bgroup
 \ifnum\@msidraft=\@ne
   \wasdrafttrue
 \else
   \wasdraftfalse%
 \fi
 \def\LaTeXparams{}%
 \dispkind=\z@
 \def\LaTeXparams{}%
 \doFRAMEparams{#1}%
 \ifnum\dispkind=\z@\IFRAME{#2}{#3}{#4}{#7}{#8}{#5}\else
  \ifnum\dispkind=\@ne\DFRAME{#2}{#3}{#7}{#8}{#5}\else
   \ifnum\dispkind=\tw@
    \edef\@tempa{\noexpand\FFRAME{\LaTeXparams}}%
    \@tempa{#2}{#3}{#5}{#6}{#7}{#8}%
    \fi
   \fi
  \fi
  \ifwasdraft\@msidraft=1\else\@msidraft=0\fi{}%
  \egroup
 }%
\def\TEXUX#1{"texux"}
\long\def\QQQ#1#2{%
     \long\expandafter\def\csname#1\endcsname{#2}}%
\long\def\QQA#1#2{}%
\def\QTR#1#2{{\csname#1\endcsname {#2}}}%
\def\EXPAND#1[#2]#3{}%
\def\NOEXPAND#1[#2]#3{}%
\def\LaTeXparent#1{}%
\def\ChildStyles#1{}%
\def\ChildDefaults#1{}%
\def\QTagDef#1#2#3{}%
  \providecommand{\UNICODE}[2][]{\protect\rule{.1in}{.1in}}
  \providecommand{\U}[1]{\protect\rule{.1in}{.1in}}
\def\QQfnmark#1{\footnotemark}
 \def\abstract{%
  \if@twocolumn
   \section*{Abstract (Not appropriate in this style!)}%
   \else \small 
   \begin{center}{\bf Abstract\vspace{-.5em}\vspace{\z@}}\end{center}%
   \quotation 
   \fi
  }%
   \def\registered{\relax\ifmmode{}\r@gistered
                    \else$\m@th\r@gistered$\fi}%
 \def\r@gistered{^{\ooalign
  {\hfil\raise.07ex\hbox{$\scriptstyle\rm\text{R}$}\hfil\crcr
  \mathhexbox20D}}}}{}%
\newdimen\theight
\def\newfmtname{LaTeX2e}
  \DeclareOldFontCommand{\rm}{\normalfont\rmfamily}{\mathrm}
  \DeclareOldFontCommand{\sf}{\normalfont\sffamily}{\mathsf}
  \DeclareOldFontCommand{\tt}{\normalfont\ttfamily}{\mathtt}
  \DeclareOldFontCommand{\bf}{\normalfont\bfseries}{\mathbf}
  \DeclareOldFontCommand{\it}{\normalfont\itshape}{\mathit}
  \DeclareOldFontCommand{\sl}{\normalfont\slshape}{\@nomath\sl}
  \DeclareOldFontCommand{\sc}{\normalfont\scshape}{\@nomath\sc}
\def\alpha{{\Greekmath 010B}}%
\def\beta{{\Greekmath 010C}}%
\def\gamma{{\Greekmath 010D}}%
\def\delta{{\Greekmath 010E}}%
\def\epsilon{{\Greekmath 010F}}%
\def\zeta{{\Greekmath 0110}}%
\def\eta{{\Greekmath 0111}}%
\def\theta{{\Greekmath 0112}}%
\def\iota{{\Greekmath 0113}}%
\def\kappa{{\Greekmath 0114}}%
\def\lambda{{\Greekmath 0115}}%
\def\mu{{\Greekmath 0116}}%
\def\nu{{\Greekmath 0117}}%
\def\xi{{\Greekmath 0118}}%
\def\pi{{\Greekmath 0119}}%
\def\rho{{\Greekmath 011A}}%
\def\sigma{{\Greekmath 011B}}%
\def\tau{{\Greekmath 011C}}%
\def\upsilon{{\Greekmath 011D}}%
\def\phi{{\Greekmath 011E}}%
\def\chi{{\Greekmath 011F}}%
\def\psi{{\Greekmath 0120}}%
\def\omega{{\Greekmath 0121}}%
\def\varepsilon{{\Greekmath 0122}}%
\def\vartheta{{\Greekmath 0123}}%
\def\varpi{{\Greekmath 0124}}%
\def\varrho{{\Greekmath 0125}}%
\def\varsigma{{\Greekmath 0126}}%
\def\varphi{{\Greekmath 0127}}%
\def\nabla{{\Greekmath 0272}}
\def\FindBoldGroup{%
   {\setbox0=\hbox{$\mathbf{x\global\edef\theboldgroup{\the\mathgroup}}$}}%
}
\def\Greekmath#1#2#3#4{%
    \if@compatibility
        \ifnum\mathgroup=\symbold
           \mathchoice{\mbox{\boldmath$\displaystyle\mathchar"#1#2#3#4$}}%
                      {\mbox{\boldmath$\textstyle\mathchar"#1#2#3#4$}}%
                      {\mbox{\boldmath$\scriptstyle\mathchar"#1#2#3#4$}}%
                      {\mbox{\boldmath$\scriptscriptstyle\mathchar"#1#2#3#4$}}%
        \else
           \mathchar"#1#2#3#4% 
        \fi 
    \else 
        \FindBoldGroup
        \ifnum\mathgroup=\theboldgroup % For 2e
           \mathchoice{\mbox{\boldmath$\displaystyle\mathchar"#1#2#3#4$}}%
                      {\mbox{\boldmath$\textstyle\mathchar"#1#2#3#4$}}%
                      {\mbox{\boldmath$\scriptstyle\mathchar"#1#2#3#4$}}%
                      {\mbox{\boldmath$\scriptscriptstyle\mathchar"#1#2#3#4$}}%
        \else
           \mathchar"#1#2#3#4% 
        \fi     	    
	  \fi}
\newif\ifGreekBold  \GreekBoldfalse
\let\SAVEPBF=\pbf
\def\pbf{\GreekBoldtrue\SAVEPBF}%
  \newcounter{equationnumber}  
  \def\mathletters{%
     \addtocounter{equation}{1}
     \edef\@currentlabel{\theequation}%
     \setcounter{equationnumber}{\c@equation}
     \setcounter{equation}{0}%
     \edef\theequation{\@currentlabel\noexpand\alph{equation}}%
  }
    \def\BibTeX{{\rm B\kern-.05em{\sc i\kern-.025em b}\kern-.08em
                 T\kern-.1667em\lower.7ex\hbox{E}\kern-.125emX}}}{}%
\def\AmS{{\protect\usefont{OMS}{cmsy}{m}{n}%
                A\kern-.1667em\lower.5ex\hbox{M}\kern-.125emS}}}{}%
\def\@@eqncr{\let\@tempa\relax
    \ifcase\@eqcnt \def\@tempa{& & &}\or \def\@tempa{& &}%
      \else \def\@tempa{&}\fi
     \@tempa
     \if@eqnsw
        \iftag@
           \@taggnum
        \else
           \@eqnnum\stepcounter{equation}%
        \fi
     \fi
     \global\tag@false
     \global\@eqnswtrue
     \global\@eqcnt\z@\cr}
\def\TCItag{\@ifnextchar*{\@TCItagstar}{\@TCItag}}
\def\@TCItag#1{%
    \global\tag@true
    \global\def\@taggnum{(#1)}%
    \global\def\@currentlabel{#1}}
\def\@TCItagstar*#1{%
    \global\tag@true
    \global\def\@taggnum{#1}%
    \global\def\@currentlabel{#1}}
\def\tint{\msi@int\textstyle\int}%
\def\tiint{\msi@int\textstyle\iint}%
\def\tiiint{\msi@int\textstyle\iiint}%
\def\tiiiint{\msi@int\textstyle\iiiint}%
\def\tidotsint{\msi@int\textstyle\idotsint}%
\def\toint{\msi@int\textstyle\oint}%
\newtoks\temptoksa
\newtoks\temptoksb
\newtoks\temptoksc
\def\msi@int#1#2{%
 \def\@temp{{#1#2\the\temptoksc_{\the\temptoksa}^{\the\temptoksb}}}%   
 \futurelet\@nextcs
 \@int
}
\def\@int{%
   \ifx\@nextcs\limits
      \typeout{Found limits}%
      \temptoksc={\limits}%
	  \let\@next\@intgobble%
   \else\ifx\@nextcs\nolimits
      \typeout{Found nolimits}%
      \temptoksc={\nolimits}%
	  \let\@next\@intgobble%
   \else
      \typeout{Did not find limits or no limits}%
      \temptoksc={}%
      \let\@next\msi@limits%
   \fi\fi
   \@next   
}%
\def\@intgobble#1{%
   \typeout{arg is #1}%
   \msi@limits
}
\def\msi@limits{%
   \temptoksa={}%
   \temptoksb={}%
   \@ifnextchar_{\@limitsa}{\@limitsb}%
}
\def\@limitsa_#1{%
   \temptoksa={#1}%
   \@ifnextchar^{\@limitsc}{\@temp}%
}
\def\@limitsb{%
   \@ifnextchar^{\@limitsc}{\@temp}%
}
\def\@limitsc^#1{%
   \temptoksb={#1}%
   \@ifnextchar_{\@limitsd}{\@temp}%   
}
\def\@limitsd_#1{%
   \temptoksa={#1}%
   \@temp
}
\def\dint{\msi@int\displaystyle\int}%
\def\diint{\msi@int\displaystyle\iint}%
\def\diiint{\msi@int\displaystyle\iiint}%
\def\diiiint{\msi@int\displaystyle\iiiint}%
\def\didotsint{\msi@int\displaystyle\idotsint}%
\def\doint{\msi@int\displaystyle\oint}%
\def\dbigcap{\mathop{\displaystyle \bigcap }}%
\def\ExitTCILatex{\makeatother }
\if@compatibility\message{amsmath already loaded}\fi\aftergroup\ExitTCILatex}
\if@compatibility\message{amstex already loaded}\fi\aftergroup\ExitTCILatex}
\if@compatibility\message{amsgen already loaded}\fi\aftergroup\ExitTCILatex}
\let\DOTSI\relax
\def\RIfM@{\relax\ifmmode}%
\def\FN@{\futurelet\next}%
\def\iint{\DOTSI\intno@\tw@\FN@\ints@}%
\def\iiint{\DOTSI\intno@\thr@@\FN@\ints@}%
\def\iiiint{\DOTSI\intno@4 \FN@\ints@}%
\def\idotsint{\DOTSI\intno@\z@\FN@\ints@}%
\def\ints@{\findlimits@\ints@@}%
\newif\iflimtoken@
\newif\iflimits@
\def\findlimits@{\limtoken@true\ifx\next\limits\limits@true
 \else\ifx\next\nolimits\limits@false\else
 \limtoken@false\ifx\ilimits@\nolimits\limits@false\else
 \ifinner\limits@false\else\limits@true\fi\fi\fi\fi}%
\def\multint@{\int\ifnum\intno@=\z@\intdots@                          %1
 \else\intkern@\fi                                                    %2
 \ifnum\intno@>\tw@\int\intkern@\fi                                   %3
 \ifnum\intno@>\thr@@\int\intkern@\fi                                 %4
 \int}%                                                               %5
\def\multintlimits@{\intop\ifnum\intno@=\z@\intdots@\else\intkern@\fi
 \ifnum\intno@>\tw@\intop\intkern@\fi
 \ifnum\intno@>\thr@@\intop\intkern@\fi\intop}%
\def\intic@{%
    \mathchoice{\hskip.5em}{\hskip.4em}{\hskip.4em}{\hskip.4em}}%
\def\negintic@{\mathchoice
 {\hskip-.5em}{\hskip-.4em}{\hskip-.4em}{\hskip-.4em}}%
\def\ints@@{\iflimtoken@                                              %1
 \def\ints@@@{\iflimits@\negintic@
   \mathop{\intic@\multintlimits@}\limits                             %2
  \else\multint@\nolimits\fi                                          %3
  \eat@}%                                                             %4
 \else                                                                %5
 \def\ints@@@{\iflimits@\negintic@
  \mathop{\intic@\multintlimits@}\limits\else
  \multint@\nolimits\fi}\fi\ints@@@}%
\def\intkern@{\mathchoice{\!\!\!}{\!\!}{\!\!}{\!\!}}%
\def\plaincdots@{\mathinner{\cdotp\cdotp\cdotp}}%
\def\intdots@{\mathchoice{\plaincdots@}%
 {{\cdotp}\mkern1.5mu{\cdotp}\mkern1.5mu{\cdotp}}%
 {{\cdotp}\mkern1mu{\cdotp}\mkern1mu{\cdotp}}%
 {{\cdotp}\mkern1mu{\cdotp}\mkern1mu{\cdotp}}}%
\def\RIfM@{\relax\protect\ifmmode}
\def\text{\RIfM@\expandafter\text@\else\expandafter\mbox\fi}
\let\nfss@text\text
\def\text@#1{\mathchoice
   {\textdef@\displaystyle\f@size{#1}}%
   {\textdef@\textstyle\tf@size{\firstchoice@false #1}}%
   {\textdef@\textstyle\sf@size{\firstchoice@false #1}}%
   {\textdef@\textstyle \ssf@size{\firstchoice@false #1}}%
   \glb@settings}
\def\textdef@#1#2#3{\hbox{{%
                    \everymath{#1}%
                    \let\f@size#2\selectfont
                    #3}}}
\newif\iffirstchoice@
\def\Let@{\relax\iffalse{\fi\let\\=\cr\iffalse}\fi}%
\def\vspace@{\def\vspace##1{\crcr\noalign{\vskip##1\relax}}}%
\def\multilimits@{\bgroup\vspace@\Let@
 \baselineskip\fontdimen10 \scriptfont\tw@
 \advance\baselineskip\fontdimen12 \scriptfont\tw@
 \lineskip\thr@@\fontdimen8 \scriptfont\thr@@
 \lineskiplimit\lineskip
 \vbox\bgroup\ialign\bgroup\hfil$\m@th\scriptstyle{##}$\hfil\crcr}%
\def\Sb{_\multilimits@}%
\def\endSb{\crcr\egroup\egroup\egroup}%
\def\Sp{^\multilimits@}%
\newdimen\ex@
\def\rightarrowfill@#1{$#1\m@th\mathord-\mkern-6mu\cleaders
 \hbox{$#1\mkern-2mu\mathord-\mkern-2mu$}\hfill
 \mkern-6mu\mathord\rightarrow$}%
\def\leftarrowfill@#1{$#1\m@th\mathord\leftarrow\mkern-6mu\cleaders
 \hbox{$#1\mkern-2mu\mathord-\mkern-2mu$}\hfill\mkern-6mu\mathord-$}%
\def\leftrightarrowfill@#1{$#1\m@th\mathord\leftarrow
\mkern-6mu\cleaders
 \hbox{$#1\mkern-2mu\mathord-\mkern-2mu$}\hfill
 \mkern-6mu\mathord\rightarrow$}%
\def\overrightarrow{\mathpalette\overrightarrow@}%
\def\overrightarrow@#1#2{\vbox{\ialign{##\crcr\rightarrowfill@#1\crcr
 \noalign{\kern-\ex@\nointerlineskip}$\m@th\hfil#1#2\hfil$\crcr}}}%
\def\overleftarrow{\mathpalette\overleftarrow@}%
\def\overleftarrow@#1#2{\vbox{\ialign{##\crcr\leftarrowfill@#1\crcr
 \noalign{\kern-\ex@\nointerlineskip}$\m@th\hfil#1#2\hfil$\crcr}}}%
\def\overleftrightarrow{\mathpalette\overleftrightarrow@}%
\def\overleftrightarrow@#1#2{\vbox{\ialign{##\crcr
   \leftrightarrowfill@#1\crcr
 \noalign{\kern-\ex@\nointerlineskip}$\m@th\hfil#1#2\hfil$\crcr}}}%
\def\underrightarrow{\mathpalette\underrightarrow@}%
\def\underrightarrow@#1#2{\vtop{\ialign{##\crcr$\m@th\hfil#1#2\hfil
  $\crcr\noalign{\nointerlineskip}\rightarrowfill@#1\crcr}}}%
\def\underleftarrow{\mathpalette\underleftarrow@}%
\def\underleftarrow@#1#2{\vtop{\ialign{##\crcr$\m@th\hfil#1#2\hfil
  $\crcr\noalign{\nointerlineskip}\leftarrowfill@#1\crcr}}}%
\def\underleftrightarrow{\mathpalette\underleftrightarrow@}%
\def\underleftrightarrow@#1#2{\vtop{\ialign{##\crcr$\m@th
  \hfil#1#2\hfil$\crcr
 \noalign{\nointerlineskip}\leftrightarrowfill@#1\crcr}}}%
\def\qopnamewl@#1{\mathop{\operator@font#1}\nlimits@}
\let\nlimits@\displaylimits
\def\setboxz@h{\setbox\z@\hbox}
\def\varlim@#1#2{\mathop{\vtop{\ialign{##\crcr
 \hfil$#1\m@th\operator@font lim$\hfil\crcr
 \noalign{\nointerlineskip}#2#1\crcr
 \noalign{\nointerlineskip\kern-\ex@}\crcr}}}}
 \def\rightarrowfill@#1{\m@th\setboxz@h{$#1-$}\ht\z@\z@
  $#1\copy\z@\mkern-6mu\cleaders
  \hbox{$#1\mkern-2mu\box\z@\mkern-2mu$}\hfill
  \mkern-6mu\mathord\rightarrow$}
\def\leftarrowfill@#1{\m@th\setboxz@h{$#1-$}\ht\z@\z@
  $#1\mathord\leftarrow\mkern-6mu\cleaders
  \hbox{$#1\mkern-2mu\copy\z@\mkern-2mu$}\hfill
  \mkern-6mu\box\z@$}
\def\projlim{\qopnamewl@{proj\,lim}}
\def\injlim{\qopnamewl@{inj\,lim}}
\def\varinjlim{\mathpalette\varlim@\rightarrowfill@}
\def\varprojlim{\mathpalette\varlim@\leftarrowfill@}
\def\varliminf{\mathpalette\varliminf@{}}
\def\varliminf@#1{\mathop{\underline{\vrule\@depth.2\ex@\@width\z@
   \hbox{$#1\m@th\operator@font lim$}}}}
\def\varlimsup{\mathpalette\varlimsup@{}}
\def\varlimsup@#1{\mathop{\overline
  {\hbox{$#1\m@th\operator@font lim$}}}}
\def\align{\@verbatim \frenchspacing\@vobeyspaces \@alignverbatim
You are using the "align" environment in a style in which it is not defined.}
\let\csname endalign*\endcsname =\endtrivlist
\def\alignat{\@verbatim \frenchspacing\@vobeyspaces \@alignatverbatim
You are using the "alignat" environment in a style in which it is not defined.}
\let\csname endalignat*\endcsname =\endtrivlist
\def\xalignat{\@verbatim \frenchspacing\@vobeyspaces \@xalignatverbatim
You are using the "xalignat" environment in a style in which it is not defined.}
\let\csname endxalignat*\endcsname =\endtrivlist
\def\gather{\@verbatim \frenchspacing\@vobeyspaces \@gatherverbatim
You are using the "gather" environment in a style in which it is not defined.}
\let\csname endgather*\endcsname =\endtrivlist
\def\multiline{\@verbatim \frenchspacing\@vobeyspaces \@multilineverbatim
You are using the "multiline" environment in a style in which it is not defined.}
\let\csname endmultiline*\endcsname =\endtrivlist
\def\arrax{\@verbatim \frenchspacing\@vobeyspaces \@arraxverbatim
You are using a type of "array" construct that is only allowed in AmS-LaTeX.}
\def\tabulax{\@verbatim \frenchspacing\@vobeyspaces \@tabulaxverbatim
You are using a type of "tabular" construct that is only allowed in AmS-LaTeX.}
\let\csname endarrax*\endcsname =\endtrivlist
\let\csname endtabulax*\endcsname =\endtrivlist
 \def\endequation{%
     \ifmmode\ifinner % FLEQN hack
      \iftag@
        \addtocounter{equation}{-1} % undo the increment made in the begin part
        $\hfil
           \displaywidth\linewidth\@taggnum\egroup \endtrivlist
        \global\tag@false
        \global\@ignoretrue   
      \else
        $\hfil
           \displaywidth\linewidth\@eqnnum\egroup \endtrivlist
        \global\tag@false
        \global\@ignoretrue 
      \fi
     \else   
      \iftag@
        \addtocounter{equation}{-1} % undo the increment made in the begin part
        \eqno \hbox{\@taggnum}
        \global\tag@false%
        $$\global\@ignoretrue
      \else
        \eqno \hbox{\@eqnnum}% $$ BRACE MATCHING HACK
        $$\global\@ignoretrue
      \fi
     \fi\fi
 } 
 \newif\iftag@ \tag@false
 \def\TCItag{\@ifnextchar*{\@TCItagstar}{\@TCItag}}
 \def\@TCItag#1{%
     \global\tag@true
     \global\def\@taggnum{(#1)}%
     \global\def\@currentlabel{#1}}
 \def\@TCItagstar*#1{%
     \global\tag@true
     \global\def\@taggnum{#1}%
     \global\def\@currentlabel{#1}}
     \def\tag{\@ifnextchar*{\@tagstar}{\@tag}}
     \def\@tag#1{%
         \global\tag@true
         \global\def\@taggnum{(#1)}}
     \def\@tagstar*#1{%
         \global\tag@true
         \global\def\@taggnum{#1}}
\begin{document}

\title{On Strongly Convex Sets and Farthest Distance Functions}
\author{Juan Enrique Mart\'{\i}nez-Legaz\thanks{%
I gratefully acknowledge financial support by Grant PID2022-136399NB-C22
from MICINN, Spain, and by ERDF, \textquotedblleft A way to make
Europe\textquotedblright , European Union.} \\
%EndAName
Department d'Economia i d'Hist\`{o}ria Econ\`{o}mica\\
Universitat Aut\`{o}noma de Barcelona\\
Spain\\
juanenrique.martinez.legaz@uab.cat}
\date{}
\maketitle

\begin{abstract}
Given a positive number $R,$ a polarity notion for sets in a real Banach
space is introduced in such a way that the second polar of a set coincides
with the smallest strongly convex set with respect to $R$ that contains it.
In this chapter, strongly convex sets are characterized in terms of their
associated farthest distance functions, and farthest distance functions
associated with strongly convex sets are characterized, too.
\end{abstract}

\noindent \textit{Keywords:\ }Strongly convex sets, farthest distance
functions, polarity.

\bigskip

\noindent \textit{2020 Mathematics Subject Classification: 52A05, 26B25.}

\section{Introduction}

Let $\left( E,\left\Vert \cdot \right\Vert \right) $ be a real normed space.
We say that a set $C\subseteq E$ is \emph{strongly convex }with respect to a
positive real number $R$ if it is an intersection of a (possibly empty)
collection of closed balls with radius $R.$ This definition of strong
convexity can be regarded as a generalization, for closed sets in a normed
space, of the Euclidean notion of strongly convex set with respect to $R$
introduced in \cite[Definition 2.1]{V82}, since, by \cite[Theorem 1, (i) $%
\Leftrightarrow $ (iv)]{V82}, a closed set in a Euclidean space is strongly
convex with respect to $R>0$ (in the sense of \cite[Definition 2.1]{V82}) if
and only if it is an intersection of a collection of closed balls with
radius $R.$ Strong convexity is an important notion in applied mathematics;
see, e.g., the recent paper \cite{NNT23}, which lists several applications
to optimization and optimal control, among other fields, and the survey
paper \cite{GI17}, where some additional applications, in particular to the
geometry of Hilbert spaces and set-valued analysis, are mentioned. A strict
separation property for strongly conves sets by balls has been established
in \cite[Theorerm 4.3]{NNT23}. The more general notion of $M$-strongly
convex set, $M\subseteq E$ being convex and closed and satisfying an
additional suitable property, was introduced in \cite{BP00};\ in the
particular case where $M$ is a ball, this notion reduces to strong convexity.

The strong convexity of sets is related to the homonymous concept for
functions. A real-valued function $f$ on the Euclidean space $\mathbb{R}^{n}$
is called strongly convex if there exists $\alpha >0$ such that%
\begin{equation}
f\left( \left( 1-\lambda \right) x+\lambda y\right) \leq \left( 1-\lambda
\right) f\left( x\right) +\lambda f\left( y\right) -\alpha \lambda \left(
1-\lambda \right) \left\Vert x-y\right\Vert ^{2}  \label{str conv}
\end{equation}%
for all $x,y\in \mathbb{R}^{n}$ and $\lambda \in \left[ 0,1\right] .$ It is
easy to see that this condition is equivalent to the convexity of $f-\alpha
\left\Vert \cdot \right\Vert ^{2}$ (see \cite[Proposition 4.3]{V83}). A
function satisfying (\ref{str conv}) is said to be $\alpha $-convex. In \cite%
{V82}, natural notions of local strong convexity of sets and functions are
defined in such a way that a function $f:\mathbb{R}^{n}\rightarrow \mathbb{R}
$ is locally strongly convex if and only if its epigraph%
\begin{equation*}
Epi~f:=\left\{ \left( x,\beta \right) \in \mathbb{R}^{n}\times \mathbb{R}%
:f\left( x\right) \leq \beta \right\}
\end{equation*}%
is locally strongly convex \cite[Theorem 4]{V82}. Another relation between
the strong convexity of functions and sets was established in \cite[%
Proposition 4.14]{V83}, namely, if $f$ is $\alpha $-convex and%
\begin{equation*}
M:=\sup \left\{ \left\Vert y^{\ast }\right\Vert :y^{\ast }\in \partial
f\left( y\right) ,\text{ }y\in b\left( f^{-1}\left( \left( -\infty ,f\left(
x\right) \right] \right) \right) \right\}
\end{equation*}%
(see (\ref{subd}) below for the definition of $\partial $), with $b$
denoting boundary, then $M<+\infty $ and the sublevel set%
\begin{equation*}
f^{-1}\left( \left( -\infty ,f\left( x\right) \right] \right) =\left\{ y\in 
\mathbb{R}^{n}:f\left( y\right) \leq f\left( x\right) \right\}
\end{equation*}%
is strongly convex with respect to $\frac{M}{2\alpha }.$

The other main notion studied in this work is that of the \emph{farthest
distance function} $F_{C}^{\left\Vert \cdot \right\Vert }:\mathbb{R}%
^{n}\rightarrow \mathbb{R}$ to a nonempty compact convex set $C\subseteq 
\mathbb{R}^{n},$ which is defined by%
\begin{equation*}
F_{C}^{\left\Vert \cdot \right\Vert }\left( x\right) :=\max_{c\in
C}\left\Vert x-c\right\Vert .
\end{equation*}%
Clearly, $F_{C}^{\left\Vert \cdot \right\Vert }$ is convex and Lipschitz
with constant $1,$ since it is the pointwise maximum of a collection of
convex functions that are Lipschitz with constant $1.$ It is related to the
Haussdorff distance, as the farthest distance from a point $x$ to a bounded
set $C$ is nothing but the Hausdorff distance between $\left\{ x\right\} $
and $C.$ Several authors have studied the farthest distance function; see,
e.g., \cite{BG14, BI06, F80, NNT23}. Interestingly, despite being convex,
the farthest distance function to a strongly convex set in a Hilbert space
enjoys a generalized concavity property. To be more specific, \cite{BG14}
shows relations between the strong convexity of a set and a weak concavity
property of its associated farthest distance function. According to \cite[%
Definition 1.6]{BG14}, a function $f$ defined on an open subset $U$ of a
Hilbert space $H$ is called weakly concave with the constant $\gamma $ if it
is continuous on $U$ and $f-\frac{\gamma }{2}\left\Vert \cdot \right\Vert
^{2}$ is concave on each convex subset of $U.$ As proved in \cite[Theorem 3.1%
]{BG14}, if $C,$ a subset of a Hilbert space $H,$ is strongly convex with
respect to $R>0$ and $r>R,$ then $F_{C}^{\left\Vert \cdot \right\Vert }$ is
weakly concave with the constant $\gamma :=\frac{1}{r-R}$ on the set $%
T_{C}\left( r\right) :=\left\{ x\in H:F_{C}^{\left\Vert \cdot \right\Vert
}\left( x\right) >r\right\} $ (this set is called the antineighborhood of
radius $r$ for the set $C;$ clearly, it consists of the centers of the balls
with radius $r$ that do not contain $C).$ Conversely, \cite[Theorem 3.4]%
{BG14} states that if $C\subset H$ is a closed convex bounded set, $r>0,$
and $F_{C}^{\left\Vert \cdot \right\Vert }$ is weakly concave on $%
T_{C}\left( r\right) $ with the constant $\gamma >0,$ then $C$ is strongly
convex with respect to $r-\frac{1}{\gamma }.$

The aim of this chapter is to investigate strongly convex sets and their
associated farthest distance functions. Although convex analysis has been
extensively explored in classical monographs \cite{HL93, R70, Z02}, and
various forms of convexity for sets and multifunctions have been studied
(see, e.g., \cite[Section 2.4]{GHTZ03}), strongly convex sets have received
relatively little attention in the literature, despite their core properties
having been addressed in the excellent papers cited above. This work seeks
to provide new insights and complement the existing results. We introduce a
notion of polarity for strongly convex sets with respect to $R>0,$ in such a
way that the second polar of a set coincides with the smallest strongly
convex set with respect to $R>0$ that contains it. We then study the
properties of the farthest distance function associated with a strongly
convex set in $\mathbb{R}^{n}$ and show how to obtain directly, in an easy
way, the farthest distance function of the polar of a strongly convex set
from the one of the given set. Moreover, we characterize strongly convex
sets in terms of their associated farthest distance functions and, vice
versa, the farthest distance functions associated with strongly convex sets.

The terminology and notation used in this chapter is rather standard. The
closed ball with center $c\in E$ and radius $R>0$ is%
\begin{equation*}
B\left( c,R\right) :=\left\{ x\in E:\left\Vert x-c\right\Vert \leq R\right\}
.
\end{equation*}%
Given a norm $\left\Vert \cdot \right\Vert $ in $\mathbb{R}^{n},$ its dual
norm $\left\Vert \cdot \right\Vert _{\ast }$ is defined by%
\begin{equation*}
\left\Vert x^{\ast }\right\Vert _{\ast }:=\max_{\left\Vert x\right\Vert \leq
1}\left\langle x,x^{\ast }\right\rangle ,
\end{equation*}%
with $\left\langle \cdot ,\cdot \right\rangle $ denoting the Euclidean
scalar product. The unit sphere corresponding to this dual norm is%
\begin{equation*}
S_{\ast }:=\left\{ x^{\ast }\in \mathbb{R}^{n}:\left\Vert x^{\ast
}\right\Vert _{\ast }=1\right\} .
\end{equation*}%
The support function $\sigma _{C}:H\rightarrow \mathbb{R}$ of a set $C$ in a
Hilbert space $\left( H,\left\langle \cdot ,\cdot \right\rangle \right) $ is
defined by 
\begin{equation*}
\sigma _{C}\left( x^{\ast }\right) :=\sup_{x\in C}\left\langle x,x^{\ast
}\right\rangle .
\end{equation*}%
The Fenchel conjugate and the Fenchel subdifferential operator of a lower
semicontinuous proper convex function $f:\mathbb{R}^{n}\rightarrow \mathbb{%
R\cup }\left\{ +\infty \right\} $ are the function $f^{\ast }:\mathbb{R}%
^{n}\rightarrow \mathbb{R\cup }\left\{ +\infty \right\} $ and the set-valued
mapping $\partial f:\mathbb{R}^{n}\rightrightarrows \mathbb{R}^{n},$
respectively, defined by%
\begin{equation*}
f^{\ast }\left( x^{\ast }\right) :=\sup_{x\in \mathbb{R}^{n}}\left\{
\left\langle x,x^{\ast }\right\rangle -f\left( x\right) \right\}
\end{equation*}%
and%
\begin{equation}
\partial f\left( x\right) :=\left\{ x^{\ast }\in \mathbb{R}^{n}:f\left(
y\right) \geq f\left( x\right) +\left\langle y-x,x^{\ast }\right\rangle
\quad \forall y\in \mathbb{R}^{n}\right\} .  \label{subd}
\end{equation}%
The ordinary distance function $d_{C}:\mathbb{R}^{n}\rightarrow \mathbb{R}$
to a non-empty closed set $C\subseteq \mathbb{R}^{n}$ is defined by%
\begin{equation*}
d_{C}\left( x\right) :=\min_{c\in C}\left\Vert x-c\right\Vert .
\end{equation*}%
As usual, $\left\Vert \cdot \right\Vert _{\infty }$ will denote the uniform
norm in the space of bounded functions on $\mathbb{R}^{n}.$

The remainder of this chapter consists of three sections. Section 2 reviews
the fundamental properties of strongly convex sets, Section 3 develops a
polarity notion for strongly convex sets with respect to a given radius, and
Section 4 studies the relationship between strongly convex sets and their
associated farthest distance functions.

\section{Some basic properties of strongly convex sets}

We start by observing that it immediately follows from the definition of
strong convexity that a set is strongly convex with respect to $R>0$ if and
only if it is the intersection of all the closed balls with radius $R$ that
contain it. Notice also that the whole space $E$ is strongly convex with
respect to $R,$ since it is the intersection of the empty collection.

The following proposition is immediate.

\begin{proposition}
\label{int}The intersection of an arbitrary collection of strongly convex
sets with respect to $R>0$ is strongly convex with respect to $R,$ too.
\end{proposition}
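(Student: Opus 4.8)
The plan is to argue directly from the characterization noted just before the proposition: a set is strongly convex with respect to $R$ if and only if it equals the intersection of all closed balls of radius $R$ that contain it. So the whole statement reduces to a routine fact about intersections.

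First I would fix a family $\left\{ C_{i}\right\} _{i\in I}$ of sets, each strongly convex with respect to $R$, and set $C:=\bigcap_{i\in I}C_{i}$. By the definition of strong convexity, each $C_{i}$ is itself an intersection of some collection $\mathcal{B}_{i}$ of closed balls of radius $R$ (equivalently, $\mathcal{B}_{i}$ can be taken to be the collection of all radius-$R$ balls containing $C_{i}$). Then $C=\bigcap_{i\in I}\bigcap_{B\in \mathcal{B}_{i}}B$, which is again an intersection of a collection of closed balls of radius $R$, namely the collection $\bigcup_{i\in I}\mathcal{B}_{i}$. Hence $C$ is strongly convex with respect to $R$ by definition. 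The case $I=\varnothing$ is covered because $C=E$ then, and $E$ was already observed to be strongly convex with respect to $R$ (the intersection of the empty collection of balls); likewise if some $C_{i}$ is empty the intersection is empty, which is vacuously an intersection of balls.

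There is no real obstacle here — the only thing to be slightly careful about is the bookkeeping for degenerate cases (empty index set, empty members, $C_{i}=E$ contributing an empty subfamily $\mathcal{B}_{i}$), all of which are consistent with the convention that the empty intersection of radius-$R$ balls is $E$. I would simply state the one-line argument and note that it holds "regardless of the cardinality of the index set," mirroring the phrasing already used in the excerpt.
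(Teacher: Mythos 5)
Your argument is correct and is exactly the ``immediate'' reasoning the paper has in mind (the paper states Proposition \ref{int} without proof): writing each $C_{i}$ as $\bigcap_{B\in\mathcal{B}_{i}}B$ with $\mathcal{B}_{i}$ a family of radius-$R$ balls and observing that $\bigcap_{i\in I}C_{i}=\bigcap_{B\in\bigcup_{i\in I}\mathcal{B}_{i}}B$ settles everything, including the degenerate cases. One tiny remark: the case of an empty $C_{i}$ needs no separate discussion (and is not ``vacuously'' an intersection of balls) — it is already covered by the general identity, since $\mathcal{B}_{i}$ is then a family of radius-$R$ balls with empty intersection and the union of the families inherits this.
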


Our next proposition addresses the strong convexity of balls.

\begin{proposition}
Let $c\in E$ and $R,R^{\prime }>0.$ The closed ball $B\left( c,R^{\prime
}\right) $ is strongly convex with respect to $R$ if and only if $R\geq
R^{\prime }.$
\end{proposition}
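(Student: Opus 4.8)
The plan is to invoke the observation made at the beginning of this section: a set is strongly convex with respect to $R$ precisely when it equals the intersection of all closed balls of radius $R$ that contain it. So the first step is to identify which radius-$R$ balls contain $B\left( c,R^{\prime }\right) $. Given $a\in E$, the farthest point of $B\left( c,R^{\prime }\right) $ from $a$ lies on the ray emanating from $a$ through $c$, which yields $\sup_{x\in B\left( c,R^{\prime }\right) }\left\Vert x-a\right\Vert =\left\Vert a-c\right\Vert +R^{\prime }$; hence $B\left( a,R\right) \supseteq B\left( c,R^{\prime }\right) $ if and only if $\left\Vert a-c\right\Vert \leq R-R^{\prime }$.

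For the "only if" direction I argue by contraposition. If $R<R^{\prime }$, then the inequality $\left\Vert a-c\right\Vert \leq R-R^{\prime }$ is unsatisfiable, so no closed ball of radius $R$ contains $B\left( c,R^{\prime }\right) $. A set that is strongly convex with respect to $R$ is an intersection of a collection of radius-$R$ balls, each of which would then have to contain $B\left( c,R^{\prime }\right) $; the only possibility is the empty collection, whose intersection is $E$. Since $B\left( c,R^{\prime }\right) \neq E$, the ball cannot be strongly convex with respect to $R$ in this case.

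For the "if" direction, assume $R\geq R^{\prime }$ and let $\mathcal{B}$ be the (nonempty) family of balls $B\left( a,R\right) $ with $\left\Vert a-c\right\Vert \leq R-R^{\prime }$, i.e., by the first step, exactly the radius-$R$ balls containing $B\left( c,R^{\prime }\right) $. The inclusion $B\left( c,R^{\prime }\right) \subseteq \bigcap \mathcal{B}$ is clear, so it remains to show that each $x$ with $\left\Vert x-c\right\Vert >R^{\prime }$ is excluded by some member of $\mathcal{B}$. I would take $a:=c-\left( R-R^{\prime }\right) \dfrac{x-c}{\left\Vert x-c\right\Vert }$, so that $\left\Vert a-c\right\Vert =R-R^{\prime }$ and $x-a$ is a positive multiple of $x-c$; then $\left\Vert x-a\right\Vert =\left\Vert x-c\right\Vert +R-R^{\prime }>R$, whence $x\notin B\left( a,R\right) \in \mathcal{B}$. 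Therefore $\bigcap \mathcal{B}=B\left( c,R^{\prime }\right) $, and $B\left( c,R^{\prime }\right) $ is strongly convex with respect to $R$.

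The only place that needs a little care is the two norm identities used above — $\sup_{x\in B\left( c,R^{\prime }\right) }\left\Vert x-a\right\Vert =\left\Vert a-c\right\Vert +R^{\prime }$ and $\left\Vert x-a\right\Vert =\left\Vert x-c\right\Vert +R-R^{\prime }$ for the specific center $a$ chosen — both of which rest on the elementary fact that $\left\Vert u+tv\right\Vert =\left\Vert u\right\Vert +t\left\Vert v\right\Vert $ whenever $t\geq 0$ and $v$ is a nonnegative multiple of $u$; this is what pins the extremal configurations to the line through $a$ and $c$. Everything else is the triangle inequality, so I do not anticipate a genuine obstacle.
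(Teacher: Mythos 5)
Your proof is correct and follows essentially the same route as the paper: the "if" direction uses the identical excluding ball centered at $c-\left( R-R^{\prime }\right) \frac{x-c}{\left\Vert x-c\right\Vert }$ with the same triangle-inequality computations, and your farthest-point identity is just a more explicit version of the "only if" step that the paper dismisses as obvious (no radius-$R$ ball can contain a ball of strictly larger radius). The only minor difference is that you spell out the characterization $\left\Vert a-c\right\Vert \leq R-R^{\prime }$ of the containing balls and handle the empty-collection case explicitly, which adds detail but no new idea.
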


\begin{proof}
The "only if" statement is obvious, since $B\left( c,R^{\prime }\right) $
does not contain any ball with a radius larger than $R^{\prime }.$ To prove
the converse, assume that $R\geq R^{\prime }$ and let $x\in E\setminus
B\left( c,R^{\prime }\right) $. We have $B\left( c,R^{\prime }\right)
\subseteq B\left( c+\left( R-R^{\prime }\right) \frac{c-x}{\left\Vert
c-x\right\Vert },R\right) ,$ because every $y\in B\left( c,R^{\prime
}\right) $ satisfies%
\begin{equation*}
\left\Vert y-\left( c+\left( R-R^{\prime }\right) \frac{c-x}{\left\Vert
c-x\right\Vert }\right) \right\Vert \leq \left\Vert y-c\right\Vert
+R-R^{\prime }\leq R.
\end{equation*}%
On the other hand,%
\begin{eqnarray*}
\left\Vert x-\left( c+\left( R-R^{\prime }\right) \frac{c-x}{\left\Vert
c-x\right\Vert }\right) \right\Vert &=&\left\Vert \frac{\left\Vert
c-x\right\Vert +R-R^{\prime }}{\left\Vert c-x\right\Vert }\left( x-c\right)
\right\Vert \\
&=&\left\Vert c-x\right\Vert +R-R^{\prime }>R,
\end{eqnarray*}%
which shows that $x\notin B\left( c+\left( R-R^{\prime }\right) \frac{c-x}{%
\left\Vert c-x\right\Vert },R\right) .$ Thus, $x$ does not belong to the
intersection of all the closed balls with radius $R$ that contain it, and so
we conclude that $B\left( c,R^{\prime }\right) $ is strongly convex with
respect to $R.$
\end{proof}

\begin{corollary}
A set $C\subseteq E$ is strongly convex with respect to $R>0$ if it is an
intersection of a (possibly empty) collection of closed balls with (non
necessarily equal) radiuses less than or equal to $R.$
\end{corollary}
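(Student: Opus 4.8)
The plan is to derive this immediately from the two preceding propositions, so no new geometric work is needed. Suppose $C=\bigcap_{i\in I}B\left( c_{i},R_{i}\right) $ for some (possibly empty) index set $I$, where each $R_{i}\leq R$. First I would invoke the previous proposition: since $R\geq R_{i}$, each ball $B\left( c_{i},R_{i}\right) $ is strongly convex with respect to $R$. Thus $C$ is written as an intersection of a collection of sets, each of which is strongly convex with respect to $R$.

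Next I would apply Proposition~\ref{int}, which states that an arbitrary intersection of sets that are strongly convex with respect to $R$ is again strongly convex with respect to $R$. Applying it to the family $\left\{ B\left( c_{i},R_{i}\right) \right\} _{i\in I}$ gives that $C$ is strongly convex with respect to $R$, as claimed.

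The only case requiring separate mention is $I=\varnothing $, in which case $C=E$; here I would simply recall the remark made at the start of Section~2 that the whole space $E$ is strongly convex with respect to $R$ (being the intersection of the empty collection of closed balls of radius $R$). I do not expect any genuine obstacle: the substance of the argument is entirely contained in the preceding proposition (strong convexity of balls of smaller radius) and in Proposition~\ref{int} (stability of strong convexity under intersection), and the corollary is just their combination.
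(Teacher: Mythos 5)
Your argument is correct and is precisely the one the paper intends (the corollary is stated without proof, as an immediate combination of the preceding proposition on balls of radius at most $R$ with Proposition~\ref{int} on intersections, plus the remark that the empty intersection gives $E$). Nothing further is needed.
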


\begin{corollary}
If $C\subseteq E$ is strongly convex with respect to $R>0$ and $R^{\prime
}>R,$ then it is strongly convex with respect to $R^{\prime },$ too.
\end{corollary}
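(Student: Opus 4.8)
The plan is to derive this immediately from the preceding corollary. First I would unwind the hypothesis: by the observation opening Section~2, saying that $C$ is strongly convex with respect to $R$ means precisely that $C$ is the intersection of all closed balls of radius $R$ that contain it (with the empty-intersection convention covering the case $C=E$). In particular, $C$ is an intersection of a collection of closed balls, each of radius exactly $R$.

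Next, since $R^{\prime}>R$, each of these balls has radius $R\leq R^{\prime}$; hence $C$ is an intersection of a (possibly empty) collection of closed balls with radiuses less than or equal to $R^{\prime}$. Applying the preceding corollary with $R^{\prime}$ in place of $R$ then gives that $C$ is strongly convex with respect to $R^{\prime}$, which is the claim.

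There is essentially no obstacle here: the statement is a one-line consequence of the previous corollary, and the only point requiring a moment's care is the degenerate case $C=E$, already handled by the empty-collection convention recorded in the text. Alternatively, one could route through Proposition~\ref{int}: every closed ball of radius $R$ is strongly convex with respect to $R^{\prime}$ by the proposition on strong convexity of balls (since $R^{\prime}\geq R$), and $C$ is an intersection of such balls, so $C$ is strongly convex with respect to $R^{\prime}$. Either route works; invoking the preceding corollary is the shorter one.
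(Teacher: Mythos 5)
Your proof is correct and matches the paper's intended (implicit) derivation: the corollary is stated as an immediate consequence of the preceding one, namely writing $C$ as an intersection of closed balls of radius $R\leq R^{\prime}$ and applying that corollary with $R^{\prime}$ in place of $R$. Your alternative route via Proposition~1 and the proposition on strong convexity of balls is also fine, as it is exactly how the preceding corollary itself is obtained.
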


Adapting \cite[Definition 2]{V82} to our more general setting, we will
consider the following definition of $R$\emph{-strongly convex hull}.

\begin{definition}
The $R$-strongly convex hull of a set $C\subseteq E$ is the intersection of
all the strongly convex sets with respect to $R$ that contain $C$.
\end{definition}

The following result is an immediate consequence of Proposition \ref{int}.

\begin{proposition}
The $R$-strongly convex hull of $C\subseteq E$ is the smallest strongly
convex set with respect to $R$ containing $C.$
\end{proposition}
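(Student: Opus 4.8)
The plan is to unwind the definition and apply Proposition~\ref{int} directly. Write $D$ for the $R$-strongly convex hull of $C$, that is, $D:=\bigcap_{S\in\mathcal{S}}S$, where $\mathcal{S}$ denotes the family of all strongly convex sets with respect to $R$ that contain $C$. First I would record that $\mathcal{S}\neq\emptyset$: the whole space $E$ is strongly convex with respect to $R$ (being the intersection of the empty collection of closed balls of radius $R$) and trivially contains $C$, so $E\in\mathcal{S}$. Hence the intersection defining $D$ is taken over a nonempty family and is a genuine subset of $E$.

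Next I would verify the three properties that characterize the phrase \emph{smallest strongly convex set with respect to $R$ containing $C$}. (i)~$C\subseteq D$: every member of $\mathcal{S}$ contains $C$, hence so does their intersection. (ii)~$D$ is strongly convex with respect to $R$: this is exactly Proposition~\ref{int} applied to the family $\mathcal{S}$, whose members are all strongly convex with respect to $R$. (iii)~Minimality: if $S'$ is an arbitrary strongly convex set with respect to $R$ with $C\subseteq S'$, then $S'\in\mathcal{S}$, so $D=\bigcap_{S\in\mathcal{S}}S\subseteq S'$.

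Combining (i)--(iii), $D$ is a strongly convex set with respect to $R$ that contains $C$ and is contained in every such set; this is precisely the assertion. There is no real obstacle here --- the only point deserving a word of care is the nonemptiness of $\mathcal{S}$, so that $D$ is well-defined as a proper subset of $E$ rather than vacuously all of $E$ by the convention on empty intersections; and this is immediate from the observation, made just before Proposition~\ref{int}, that $E$ itself is strongly convex with respect to $R$.
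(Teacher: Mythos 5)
Your proof is correct and is exactly the argument the paper has in mind: the paper states this result as an immediate consequence of Proposition~\ref{int}, and your verification of (i)--(iii), together with the remark that $E$ itself belongs to the family, is precisely that argument spelled out. Nothing is missing.
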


To conclude this section, we briefly consider the case when $E$ is a Hilbert
space. By \cite[Theorem 2.6]{BP00} and an observation in \cite[p. 953]{BG14}%
, a nonempty closed convex set $C$ in a Hilbert space is strongly convex
with respect to $R>0$ if and only if it is a summand of $B\left( 0,R\right)
, $ that is, there exists a closed convex set $D$ such that $C+D=B\left(
0,R\right) .$ An easy consequence of this result is the following
proposition, which is a particular case of \cite[Proposition 4.1.4]{BP00}.

\begin{proposition}
A nonempty closed bounded convex set $C$ in a Hilbert space is strongly
convex with respect to $R>0$ if and only if $R\left\Vert \cdot \right\Vert
-\sigma _{C}$ is convex and continuous.
\end{proposition}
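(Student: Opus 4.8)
The plan is to reduce the whole statement to the summand characterization recalled just above together with the elementary calculus of support functions. Recall that for nonempty sets $A,A^{\prime }\subseteq H$ one has $\sigma _{A+A^{\prime }}=\sigma _{A}+\sigma _{A^{\prime }}$, that $\sigma _{B\left( 0,R\right) }=R\left\Vert \cdot \right\Vert $ because the Hilbert norm is self-dual, and that a closed convex set in $H$ is determined by its support function (so $\sigma _{A}=\sigma _{A^{\prime }}$ forces $A$ and $A^{\prime }$ to have the same closed convex hull). The cited result says that, for $C$ nonempty closed convex, strong convexity with respect to $R$ is equivalent to the existence of a closed convex set $D$ with $C+D=B\left( 0,R\right) $; so the entire proof consists in translating this summand equation into the stated condition on $R\left\Vert \cdot \right\Vert -\sigma _{C}$, and back.

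For the forward implication: from $C+D=B\left( 0,R\right) $ and $C\neq \emptyset $ one sees, fixing $d\in D$ (resp.\ $c\in C$), that both $C$ and $D$ are bounded; taking support functions in $C+D=B\left( 0,R\right) $ gives $R\left\Vert \cdot \right\Vert -\sigma _{C}=\sigma _{D}$, which is convex (a support function always is), lower semicontinuous (a supremum of continuous linear functionals), and finite everywhere (since $D$ is bounded); and a lower semicontinuous convex function finite on all of a Banach space is continuous, by a Baire-category argument applied to its closed sublevel sets. Conversely, put $g:=R\left\Vert \cdot \right\Vert -\sigma _{C}$, assumed convex and continuous, hence real-valued — which already forces $\sigma _{C}$ to be finite everywhere, i.e.\ $C$ bounded. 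Being a difference of two functions positively homogeneous of degree one, $g$ is positively homogeneous of degree one, and, being also convex, it is sublinear; a finite continuous sublinear function on $H$ is the support function of the nonempty bounded closed convex set $D:=\left\{ y\in H:\left\langle x,y\right\rangle \leq g\left( x\right) \text{ for all }x\in H\right\} $, so $g=\sigma _{D}$. Then $\sigma _{C+D}=\sigma _{C}+\sigma _{D}=\sigma _{C}+g=R\left\Vert \cdot \right\Vert =\sigma _{B\left( 0,R\right) }$, and since $C+D$ is convex this yields $\overline{C+D}=B\left( 0,R\right) $.

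The one point that needs genuine care — and the step I expect to be the main obstacle — is upgrading $\overline{C+D}=B\left( 0,R\right) $ to the exact equality $C+D=B\left( 0,R\right) $ required by the summand definition, since in infinite dimensions the sum of two closed bounded convex sets is not obviously closed. This is handled by weak compactness: if $c_{n}+d_{n}\to b$ with $c_{n}\in C$ and $d_{n}\in D$, then $\left( c_{n}\right) $ and $\left( d_{n}\right) $ are bounded, so along a subsequence $c_{n}\rightharpoonup c$ and $d_{n}\rightharpoonup d$; as closed convex sets are weakly closed, $c\in C$ and $d\in D$, whence $b=c+d\in C+D$. Thus $C+D=B\left( 0,R\right) $, so $C$ is a summand of $B\left( 0,R\right) $, i.e.\ strongly convex with respect to $R$. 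Everything else — the support-function algebra, the equivalence between finiteness of $\sigma _{C}$ and boundedness of $C$, the continuity of a finite lower semicontinuous convex function, and the sublinearity of a convex positively homogeneous function — is routine bookkeeping.
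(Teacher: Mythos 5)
Your proof is correct and follows essentially the route the paper intends: the paper states this proposition as an easy consequence of the summand characterization ($C$ is strongly convex with respect to $R$ iff $C+D=B\left( 0,R\right) $ for some closed convex $D$, citing \cite{BP00}), and your argument is exactly the support-function translation of that equivalence, with the only nontrivial detail (closedness of $C+D$, via weak compactness of bounded closed convex sets in a Hilbert space) handled correctly.
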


\section{A polarity notion for strongly convex sets}

For $R>0,$ let $\rho _{R}$ be the symmetric binary relation defined on $E$ by%
\begin{equation*}
x\rho _{R}y\Leftrightarrow \left\Vert x-y\right\Vert \leq R.
\end{equation*}%
We consider the polarity associated with $\rho _{R}$ (see \cite{B67} for the
concept of polarity), namely, the $\rho _{R}$\emph{-polar} of $C\subseteq E$
is%
\begin{equation*}
C^{\rho _{R}}:=\left\{ y\in E:x\rho _{R}y\quad \forall x\in C\right\} .
\end{equation*}%
Clearly, $C^{\rho _{R}}$ is nonempty if and only if $C$ is contained in some
ball with radius $R.$ The equality 
\begin{equation}
C^{\rho _{R}}=\dbigcap\nolimits_{x\in C}B\left( x,R\right)  \label{polar}
\end{equation}%
shows that $C^{\rho _{R}}$ is strongly convex with respect to $R.$

\begin{example}
\label{singl}For $c\in E,$ one has $\left\{ c\right\} ^{\rho _{R}}=B\left(
y,R\right) .$
\end{example}

\begin{example}
\label{balls}Let $c\in E$ and $r>0.$ It is not difficult to prove that%
\begin{equation*}
B\left( c,r\right) ^{\rho _{R}}=\left\{ 
\begin{array}{c}
\emptyset \qquad \qquad \qquad \text{if }R<r, \\ 
\left\{ c\right\} \qquad \qquad \quad \text{if }R=r, \\ 
B\left( c,R-r\right) \quad \text{ if }R\geq r.%
\end{array}%
\right.
\end{equation*}
\end{example}

\bigskip

Following \cite[p. 122, Lemma]{B67}, for every $C,D\subseteq E$ one has%
\begin{equation}
C\subseteq D\Rightarrow D^{\rho _{R}}\subseteq C^{\rho _{R}},
\label{ord rev}
\end{equation}%
\begin{equation}
C\subseteq C^{\rho _{R}\rho _{R}},  \label{incl}
\end{equation}%
\begin{equation}
C^{\rho _{R}\rho _{R}\rho _{R}}=C^{\rho _{R}}.  \label{3rd polar}
\end{equation}%
These properties imply that the mapping assigning to any set $C\subseteq E$
its second $\rho _{R}$-polar $C^{\rho _{R}\rho _{R}}$ is a closure operator,
that is, on top of (\ref{incl}), it satisfies the following properties:%
\begin{equation}
C^{\rho _{R}\rho _{R}\rho _{R}\rho _{R}}=C^{\rho _{R}\rho _{R}},
\label{4th polar}
\end{equation}%
\begin{equation}
C\subseteq D\Rightarrow C^{\rho _{R}\rho _{R}}\subseteq D^{\rho _{R}\rho
_{R}}.  \label{mon 2nd polar}
\end{equation}%
Clearly, properties (\ref{4th polar}) and (\ref{mon 2nd polar}) are
immediate consequences of (\ref{3rd polar})\ and (\ref{ord rev}),
respectively. It follows from (\ref{4th polar}) that the sets that are
closed under this operator, that is, the invariant sets under the mapping $%
C\mapsto C^{\rho _{R}\rho _{R}},$ are those of the type $C^{\rho _{R}\rho
_{R}}$ for some $C\subseteq E.$

\begin{theorem}
\label{2nd polar}For $C\subseteq E,$ one has $C^{\rho _{R}\rho _{R}}=C$ if
and only if $C$ is strongly convex with respect to $R.$
\end{theorem}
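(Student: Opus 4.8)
The plan is to prove the two implications separately, using the characterizations already established. For the "if" direction, suppose $C$ is strongly convex with respect to $R$. By the observation opening Section~2, $C$ is the intersection of all closed balls of radius $R$ that contain it, i.e. $C=\bigcap\{B(x,R):C\subseteq B(x,R)\}$. Now $C\subseteq B(x,R)$ is equivalent, by definition of the polarity, to $x\in C^{\rho_R}$ (every point of $C$ is within distance $R$ of $x$). Hence $C=\bigcap_{x\in C^{\rho_R}}B(x,R)$, and by the formula (\ref{polar}) applied to the set $C^{\rho_R}$, this intersection is exactly $(C^{\rho_R})^{\rho_R}=C^{\rho_R\rho_R}$. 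So $C^{\rho_R\rho_R}=C$.

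For the "only if" direction, suppose $C^{\rho_R\rho_R}=C$. The equality (\ref{polar}) (again applied with $C^{\rho_R}$ in place of $C$) shows that $C^{\rho_R\rho_R}=\bigcap_{x\in C^{\rho_R}}B(x,R)$ is an intersection of a (possibly empty) collection of closed balls of radius $R$, which is precisely the definition of being strongly convex with respect to $R$. Since $C=C^{\rho_R\rho_R}$, the set $C$ is strongly convex with respect to $R$.

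The argument is short because the real content is already packaged in formula (\ref{polar}), which says that any $\rho_R$-polar is an intersection of radius-$R$ balls, together with the elementary duality $C\subseteq B(x,R)\iff x\in C^{\rho_R}$. The only point requiring a little care is bookkeeping about the empty collection: if $C$ is contained in no ball of radius $R$ then $C^{\rho_R}=\emptyset$ and $C^{\rho_R\rho_R}=E$, so the equivalence in that case reads $C=E$ iff $E$ is strongly convex — which is consistent, since $E$ is the intersection of the empty family of balls and, as noted in the text, is strongly convex with respect to $R$. I expect this edge case to be the only possible source of friction; the main line of reasoning is a direct unwinding of the definitions via (\ref{polar}).
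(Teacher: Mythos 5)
Your proposal is correct and follows essentially the same route as the paper: the "only if" direction is exactly the paper's application of (\ref{polar}) to $C^{\rho_R}$, and your "if" direction rests on the same duality $C\subseteq B(x,R)\Leftrightarrow x\in C^{\rho_R}$ that the paper uses, merely packaged as the identity $C=\bigcap_{x\in C^{\rho_R}}B(x,R)=C^{\rho_R\rho_R}$ instead of the paper's element-wise argument showing $x\notin C$ implies $x\notin C^{\rho_R\rho_R}$. Your remark on the empty-collection case is consistent and harmless.
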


\begin{proof}
The "only if" statement follows from (\ref{polar}), by replacing $C$ with $%
C^{\rho _{R}}.$ For the "if"statement, we only need to prove the opposite
inclusion to (\ref{incl}). To this aim, let $x\in E\setminus C.$ Since $C$
is strongly convex with respect to $R,$ there exists $c\in E$ such that $%
C\subseteq B\left( c,R\right) $ and $x\notin B\left( c,R\right) ,$ that is, $%
c\in C^{\rho _{R}}$ and not $x\rho _{R}c.$ These two conditions imply that $%
x\notin C^{\rho _{R}\rho _{R}}.$
\end{proof}

\begin{corollary}
For $C\subseteq E,$ the set $C^{\rho _{R}\rho _{R}}$ is the $R$ strongest
convex hull of $C.$
\end{corollary}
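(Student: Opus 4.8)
The plan is to verify that $C^{\rho _{R}\rho _{R}}$ has the two properties that characterize the $R$-strongly convex hull of $C$: it is a strongly convex set with respect to $R$ that contains $C$, and it is contained in \emph{every} strongly convex set with respect to $R$ that contains $C$. Once both are in hand, the proposition identifying the $R$-strongly convex hull as the smallest such set forces $C^{\rho _{R}\rho _{R}}$ to coincide with it.

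First I would record containment and strong convexity. The inclusion $C\subseteq C^{\rho _{R}\rho _{R}}$ is exactly (\ref{incl}). For strong convexity, I would apply the identity (\ref{polar}) with $C$ replaced by $C^{\rho _{R}}$, obtaining $C^{\rho _{R}\rho _{R}}=\bigcap_{x\in C^{\rho _{R}}}B\left( x,R\right) $, which is an intersection of closed balls of radius $R$ and hence strongly convex with respect to $R$ by definition.

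Next I would establish minimality. Let $D\subseteq E$ be any strongly convex set with respect to $R$ with $C\subseteq D$. By monotonicity of the second polar, (\ref{mon 2nd polar}), we get $C^{\rho _{R}\rho _{R}}\subseteq D^{\rho _{R}\rho _{R}}$; and since $D$ is strongly convex with respect to $R$, Theorem \ref{2nd polar} (in its "if" direction) gives $D^{\rho _{R}\rho _{R}}=D$. Hence $C^{\rho _{R}\rho _{R}}\subseteq D$. As $D$ was an arbitrary strongly convex superset of $C$, this shows $C^{\rho _{R}\rho _{R}}$ lies inside all of them.

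Combining the two steps, $C^{\rho _{R}\rho _{R}}$ is a strongly convex set with respect to $R$ containing $C$ and sitting inside every other such set, so it is the smallest one, i.e., the $R$-strongly convex hull of $C$. There is no genuine obstacle here; the only points that need care are keeping straight the order-reversing behaviour of the polar versus the order-preserving behaviour of the second polar, and remembering to invoke Theorem \ref{2nd polar} to collapse $D^{\rho _{R}\rho _{R}}$ back to $D$ for strongly convex $D$.
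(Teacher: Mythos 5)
Your argument is correct and is essentially the paper's own route: the corollary is drawn directly from Theorem \ref{2nd polar} together with the basic polarity facts (\ref{polar}), (\ref{incl}) and (\ref{mon 2nd polar}), exactly the ingredients you use. Your only added care (distinguishing the order-reversing polar from the order-preserving second polar, and collapsing $D^{\rho _{R}\rho _{R}}=D$ for strongly convex $D$) is precisely what the paper's setup intends.
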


\begin{corollary}
\label{C in terms of pol}A set $C\subseteq E$ is strongly convex with
respect to $R>0$ if and only if%
\begin{equation*}
C=\dbigcap\nolimits_{x\in C^{\rho _{R}}}B\left( x,R\right) .
\end{equation*}
\end{corollary}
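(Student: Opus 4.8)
The plan is to read the corollary off directly from Theorem~\ref{2nd polar} together with the formula (\ref{polar}), with essentially no extra work. First I would apply (\ref{polar}) not to $C$ but to the set $C^{\rho _{R}}$: since (\ref{polar}) was obtained purely from the definition of the $\rho _{R}$-polar, it is valid for an arbitrary subset of $E$, and taking $C^{\rho _{R}}$ in the role of $C$ gives
\begin{equation*}
C^{\rho _{R}\rho _{R}}=\bigl( C^{\rho _{R}}\bigr) ^{\rho _{R}}=\dbigcap\nolimits_{x\in C^{\rho _{R}}}B\left( x,R\right) .
\end{equation*}
Thus the right-hand side of the asserted identity is nothing but the second $\rho _{R}$-polar of $C$.

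Next I would invoke Theorem~\ref{2nd polar}, which states that $C$ is strongly convex with respect to $R$ precisely when $C^{\rho _{R}\rho _{R}}=C$. Substituting the display above, the condition $C^{\rho _{R}\rho _{R}}=C$ is exactly the condition $C=\dbigcap\nolimits_{x\in C^{\rho _{R}}}B\left( x,R\right) $, so the two statements are equivalent and the proof is finished.

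I do not expect any genuine obstacle here; the only point requiring a moment's care is the legitimacy of using (\ref{polar}) with $C^{\rho _{R}}$ as argument, and this is immediate from its derivation. If desired, one could append a remark that, since the balls $B\left( x,R\right) $ with $x\in C^{\rho _{R}}$ are exactly the radius-$R$ balls containing $C$, the corollary simply re-expresses, in polarity language, the elementary observation made at the start of Section~2 that a strongly convex set is the intersection of all radius-$R$ balls containing it.
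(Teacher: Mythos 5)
Your proof is correct and follows essentially the same route as the paper: both apply (\ref{polar}) with $C^{\rho _{R}}$ in place of $C$ and then invoke Theorem~\ref{2nd polar}. The only cosmetic difference is that the paper treats the ``if'' direction as evident (an intersection of radius-$R$ balls is strongly convex by definition) and uses the theorem only for the ``only if'' direction, whereas you route both directions through the equivalence in Theorem~\ref{2nd polar}; either way the argument is complete.
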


\begin{proof}
The "if" statement is evident. To prove the "only if" statement, using
Theorem \ref{2nd polar} followed by (\ref{polar}) applied to $C^{\rho _{R}}$
in place of $C,$ we obtain%
\begin{equation*}
C=C^{\rho _{R}\rho _{R}}=\dbigcap\nolimits_{x\in C^{\rho _{R}}}B\left(
x,R\right) .
\end{equation*}
\end{proof}

\bigskip

Since, as observed above, the $\rho _{R}$-polar of any set is strongly
convex with respect to $R,$ Theorem \ref{2nd polar} also yields the
following corollary.

\begin{corollary}
\label{inv}The mapping $C\mapsto C^{\rho _{R}}$ is an involution of the set
of subsets of $E$ that are strongly convex with respect to $R.$
\end{corollary}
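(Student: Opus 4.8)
The plan is to check the two ingredients of the claim: that the assignment $C\mapsto C^{\rho _{R}}$ carries the collection $\mathcal{S}_{R}$ of subsets of $E$ that are strongly convex with respect to $R$ into itself, and that performing this assignment twice is the identity on $\mathcal{S}_{R}$.

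For the first ingredient I would simply recall equality (\ref{polar}): for an arbitrary $C\subseteq E$ we have $C^{\rho _{R}}=\bigcap_{x\in C}B\left( x,R\right) $, which by the very definition of strong convexity is strongly convex with respect to $R$ (including the degenerate cases: $C^{\rho _{R}}=\emptyset $ when $C$ lies in no ball of radius $R$, and $C^{\rho _{R}}=E$ when $C=\emptyset $). In particular, when $C\in \mathcal{S}_{R}$ its polar $C^{\rho _{R}}$ again lies in $\mathcal{S}_{R}$, so $C\mapsto C^{\rho _{R}}$ is a well-defined self-map of $\mathcal{S}_{R}$.

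For the second ingredient I would invoke Theorem \ref{2nd polar}, which says precisely that $C^{\rho _{R}\rho _{R}}=C$ holds if and only if $C$ is strongly convex with respect to $R$. Hence for every $C\in \mathcal{S}_{R}$ we have $\left( C^{\rho _{R}}\right) ^{\rho _{R}}=C$, i.e.\ the square of the map is the identity on $\mathcal{S}_{R}$; a self-map equal to its own inverse is a bijection of $\mathcal{S}_{R}$ onto itself and is by definition an involution. There is essentially no obstacle to overcome here—the corollary is just the combination of Theorem \ref{2nd polar} with the observation above that the $\rho _{R}$-polar of any set belongs to $\mathcal{S}_{R}$. The only point deserving a moment's care is to state the result on the right domain: on all of $2^{E}$ the map $C\mapsto C^{\rho _{R}\rho _{R}}$ is merely a closure operator, and it is only after restricting to the closed (equivalently, by Theorem \ref{2nd polar}, strongly convex) sets that $C\mapsto C^{\rho _{R}}$ becomes an involution.
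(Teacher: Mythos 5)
Your proposal is correct and follows exactly the paper's route: the $\rho_R$-polar of any set is strongly convex with respect to $R$ by (\ref{polar}), and Theorem \ref{2nd polar} gives $C^{\rho_R\rho_R}=C$ on strongly convex sets, so the map is a self-inverse bijection of that family. This is precisely the argument the paper intends, just written out in more detail.
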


The last result in this section establishes a relation between a strongly
convex set with respect to $R>0$ in a Hilbert space and its $\rho _{R}$%
-polar.

\begin{proposition}
If $H$ is a Hilbert space and $C\subset H$ is nonempty and strongly convex
with respect to $R>0,$ then%
\begin{equation}
C-C^{\rho _{R}}=B\left( 0,R\right) .  \label{C-pol}
\end{equation}
\end{proposition}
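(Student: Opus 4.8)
The plan is to establish the two inclusions of (\ref{C-pol}) separately. The inclusion $C-C^{\rho _{R}}\subseteq B\left( 0,R\right) $ is immediate: if $c\in C$ and $y\in C^{\rho _{R}}$, then $y\in B\left( c,R\right) $ by (\ref{polar}), so $\left\Vert c-y\right\Vert \leq R$, i.e. $c-y\in B\left( 0,R\right) $. The content of the proposition lies in the reverse inclusion.

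For the reverse inclusion I would invoke the summand characterization recalled at the end of Section 2 (from \cite[Theorem 2.6]{BP00} and the observation in \cite[p. 953]{BG14}). Since $C$ is a nonempty strongly convex set with respect to $R$, it is an intersection of closed balls, hence closed and convex; and since $C\neq H$ (if $C=H$ then $C^{\rho _{R}}=\emptyset $ and the statement would be vacuous), the collection of radius-$R$ balls containing $C$ is nonempty, so $C$ is bounded. Therefore $C$ is a summand of $B\left( 0,R\right) $: there is a closed convex set $D\subseteq H$ with $C+D=B\left( 0,R\right) $.

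The key step is then the observation that $-D\subseteq C^{\rho _{R}}$. To prove it, fix $d\in D$; for every $c\in C$ one has $c+d\in C+D=B\left( 0,R\right) $, hence $\left\Vert c+d\right\Vert \leq R$, i.e. $-d\in B\left( c,R\right) $. As this holds for all $c\in C$, (\ref{polar}) gives $-d\in \bigcap_{c\in C}B\left( c,R\right) =C^{\rho _{R}}$. Consequently $B\left( 0,R\right) =C+D=C-\left( -D\right) \subseteq C-C^{\rho _{R}}$, and combined with the easy inclusion above this yields (\ref{C-pol}).

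The main (minor) obstacle is bookkeeping: making sure the summand characterization genuinely applies — i.e. that $C$ is closed, convex and bounded — and discarding the trivial case $C=H$. Note that one does not need the full equality $C^{\rho _{R}}=-D$, only the one-sided inclusion $-D\subseteq C^{\rho _{R}}$, which shortens the argument considerably. A more self-contained alternative would compute support functions, using that $C$ and $C^{\rho _{R}}$ are weakly compact and convex (so that $C-C^{\rho _{R}}$ is closed and convex) and verifying $\sigma _{C}\left( x^{\ast }\right) +\sigma _{C^{\rho _{R}}}\left( -x^{\ast }\right) =R\left\Vert x^{\ast }\right\Vert $ for every $x^{\ast }\in H$; but expressing $\sigma _{C^{\rho _{R}}}$ through $\sigma _{C}$ appears more laborious than exploiting the summand description.
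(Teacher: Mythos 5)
Your proof is correct and takes essentially the same route as the paper: both rest on the summand characterization $C+D=B\left( 0,R\right) $ recalled in Section 2 and on the same direct computation showing $-D\subseteq C^{\rho _{R}}$ (for $d\in D$ and $c\in C$, $\left\Vert c+d\right\Vert \leq R$, so $-d\in B\left( c,R\right) $). The only differences are minor: the paper additionally invokes the cancellation law for nonempty bounded closed convex sets to upgrade this to the equality $-D=C^{\rho _{R}}$ before concluding, a step your one-sided inclusion shows is unnecessary, so your version is slightly leaner; and your parenthetical about $C=H$ is not quite right --- in that case the identity would be false ($\emptyset \neq B\left( 0,R\right) $) rather than vacuous --- but the case is excluded exactly as you then argue, since a proper strongly convex subset is contained in some ball of radius $R$ and hence is bounded, which is what the summand characterization requires.
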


\begin{proof}
The inclusion $\subseteq $ is obvious (and holds in every Banach space). To
prove the opposite inclusion, let $x\in B\left( 0,R\right) .$ Since $H$ is a
Hilbert space, $C$ is a summand of $B\left( 0,R\right) ,$ that is, there
exists a clsed convex set $B$ such that%
\begin{equation}
C+B=B\left( 0,R\right) .  \label{summ}
\end{equation}%
Hence, the inclusion $\subseteq $ in (\ref{C-pol}) implies, using the
cancellation law for noenmpty bounded closed convex sets, that $-C^{\rho
_{R}}\subseteq B.$ The latter inclusion actually holds as an equality;
indeed, for every $x\in B$ and $c\in C,$ we have $c+x\in C+B=B\left(
0,R\right) ,$ and therefore $\left\Vert -x-c\right\Vert \leq R,$ which shows
that $-x\in C^{\rho _{R}}.$ This proves the inclusion $B\subseteq -C^{\rho
_{R}}$ and hence the equality. Consequently, (\ref{summ}) reduces to (\ref%
{C-pol}).
\end{proof}

\section{The farthest distance function associated with a strongly convex set%
}

The function $\eta _{f}\colon \mathbb{R}^{n}\rightarrow \mathbb{R}\cup
\{+\infty \}$ defined by 
\begin{equation}
\eta _{f}(x^{\ast }):=%
\begin{cases}
\Vert x^{\ast }\Vert _{\ast }f^{\ast }\left( \frac{x^{\ast }}{\Vert x^{\ast
}\Vert _{\ast }}\right) & \text{if }x^{\ast }\neq 0, \\ 
0 & \text{if }x^{\ast }=0.%
\end{cases}
\label{eta}
\end{equation}%
will appear in the caracterizations of farthest distance functions given in
Theorem \ref{char farthest} and Corollary \ref{char farthest str conv}
below. Clearly, it is the positively homogeneous extension of the
restriction of $f^{\ast }$ to the unit sphere $S_{\ast }$. As observed in 
\cite{M24}, for $x^{\ast }\neq 0$ one has%
\begin{equation*}
\eta _{f}(x^{\ast })=\widetilde{f}^{\ast }(\Vert x^{\ast }\Vert _{\ast
},x^{\ast }),
\end{equation*}%
with $\widetilde{f}^{\ast }\colon \mathbb{R}_{+}\times \mathbb{R}%
^{n}\rightarrow \mathbb{R}\cup \{+\infty \}$ being the perspective function
corresponding to $f^{\ast },$ defined by 
\begin{equation*}
\widetilde{f}(u,x^{\ast }):=%
\begin{cases}
u\widetilde{f}\left( \frac{x^{\ast }}{u}\right) & \text{if }u>0, \\ 
+\infty & \text{otherwise;}%
\end{cases}%
\end{equation*}%
for perspective functions, the interested reader may consult \cite{HL93}.
The following result was stated in \cite[Theorem 4.1]{M24} for a norm in $%
\mathbb{R}^{n}$ less than or equal to the Euclidean norm $\Vert \cdot \Vert
_{2}$ but, as proved next, it holds true for an arbitrary norm.

\begin{theorem}
\label{char farthest}Let $\Vert \cdot \Vert $ be any norm in $\mathbb{R}^{n}$%
. For $f\colon \mathbb{R}^{n}\rightarrow \mathbb{R}$, the following
statements are equivalent:
\end{theorem}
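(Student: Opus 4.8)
The plan is to read off both implications from the support-function (dual) representation of the farthest distance function. Writing $\Vert x-c\Vert =\max_{\Vert x^{\ast }\Vert _{\ast }\leq 1}\langle x-c,x^{\ast }\rangle $ and interchanging the two suprema gives, for every nonempty compact convex $C\subseteq \mathbb{R}^{n}$,
\begin{equation*}
F_{C}^{\Vert \cdot \Vert }(x)=\max_{\Vert x^{\ast }\Vert _{\ast }\leq 1}\bigl( \langle x,x^{\ast }\rangle +\sigma _{C}(-x^{\ast })\bigr) =\max_{x^{\ast }\in S_{\ast }}\bigl( \langle x,x^{\ast }\rangle +\sigma _{C}(-x^{\ast })\bigr) ,
\end{equation*}
the reduction of the first maximum to $S_{\ast }$ being justified by the fact that $x^{\ast }\mapsto \langle x,x^{\ast }\rangle +\sigma _{C}(-x^{\ast })$ is convex, hence attains its maximum over the dual unit ball at an extreme point, and extreme points of that ball lie on $S_{\ast }$. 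Two things follow at once: $F_{C}^{\Vert \cdot \Vert }$ is convex and Lipschitz with constant $1$, being a pointwise supremum of the $1$-Lipschitz affine functions $x\mapsto \langle x,x^{\ast }\rangle +\sigma _{C}(-x^{\ast })$ with $x^{\ast }\in S_{\ast }$; and, dually, $F_{C}^{\Vert \cdot \Vert }$ is the conjugate of $\iota _{\{\Vert \cdot \Vert _{\ast }\leq 1\}}-\sigma _{C}(-\cdot )$, so that $(F_{C}^{\Vert \cdot \Vert })^{\ast }$ is the biconjugate of that function.

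For (i)$\Rightarrow$(ii): assume $f=F_{C}^{\Vert \cdot \Vert }$ and pass to the largest set with the same farthest distance function, $\widehat{C}:=\bigcap_{x\in \mathbb{R}^{n}}B\bigl( x,F_{C}^{\Vert \cdot \Vert }(x)\bigr) $, which satisfies $F_{\widehat{C}}^{\Vert \cdot \Vert }=F_{C}^{\Vert \cdot \Vert }=f$. The core computation is that $f^{\ast }(x^{\ast })=-\sigma _{\widehat{C}}(-x^{\ast })$ for every $x^{\ast }\in S_{\ast }$, i.e. $\eta _{f}$ equals the superlinear function $x^{\ast }\mapsto -\sigma _{\widehat{C}}(-x^{\ast })=\min_{c\in \widehat{C}}\langle c,x^{\ast }\rangle $; the inequality $f^{\ast }(x^{\ast })\leq -\sigma _{\widehat{C}}(-x^{\ast })$ on $S_{\ast }$ is immediate from the displayed representation applied to $\widehat{C}$, and the reverse inequality is the one genuinely delicate point, discussed below. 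Granting it, the identity $f=f^{\ast \ast }$ together with $\mathrm{dom}\,f^{\ast }\subseteq \{\Vert \cdot \Vert _{\ast }\leq 1\}$ and $f^{\ast }\leq -\sigma _{\widehat{C}}(-\cdot )$ on $S_{\ast }$ forces $f(x)=\max_{x^{\ast }\in S_{\ast }}\bigl( \langle x,x^{\ast }\rangle -f^{\ast }(x^{\ast })\bigr) =\max_{x^{\ast }\in S_{\ast }}\bigl( \langle x,x^{\ast }\rangle -\eta _{f}(x^{\ast })\bigr) $, so $f$ has the properties asserted in (ii) (convexity, $1$-Lipschitzness, superlinearity of $\eta _{f}$, and this last representation).

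For (ii)$\Rightarrow$(i): the hypotheses on $f$ make $-\eta _{f}$ a finite sublinear function, hence the support function of some nonempty compact convex set; take $C$ with $\sigma _{C}(-\cdot )=-\eta _{f}$, concretely $C:=-\{v\in \mathbb{R}^{n}:\langle v,x^{\ast }\rangle \leq -\eta _{f}(x^{\ast })\ \text{for all }x^{\ast }\}$. One checks in the usual way that $C$ is nonempty, convex and bounded, hence compact, and then the dual representation applied to this $C$ gives $F_{C}^{\Vert \cdot \Vert }(x)=\max_{x^{\ast }\in S_{\ast }}\bigl( \langle x,x^{\ast }\rangle +\sigma _{C}(-x^{\ast })\bigr) =\max_{x^{\ast }\in S_{\ast }}\bigl( \langle x,x^{\ast }\rangle -\eta _{f}(x^{\ast })\bigr) =f(x)$, the last equality being part of (ii).

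The soft parts — convexity and $1$-Lipschitzness of the farthest distance function, the conjugate bookkeeping, and the compactness of the reconstructed $C$ — I would dispose of quickly. The one step that really needs work, and the reason the theorem now holds for an arbitrary norm and not only for norms dominated by $\Vert \cdot \Vert _{2}$ as in the cited reference, is the lower bound $f^{\ast }(x^{\ast })\geq -\sigma _{\widehat{C}}(-x^{\ast })$ on all of $S_{\ast }$ (for a non-strictly-convex dual ball, $S_{\ast }$ is larger than the set of extreme points, where a bare maximum-principle argument would suffice). I would isolate this as a lemma and prove it by evaluating the supremum defining $f^{\ast }(x^{\ast })=\sup_{x}\bigl( \langle x,x^{\ast }\rangle -F_{\widehat{C}}^{\Vert \cdot \Vert }(x)\bigr) $ at test points $x_{t}=\bar{c}+tu$ (with $t\rightarrow \infty $), where $\bar{c}$ minimizes $\langle \cdot ,x^{\ast }\rangle $ over $\widehat{C}$ and $u$ is a unit vector with $\langle u,x^{\ast }\rangle =1$ lying in the face of the (primal) unit ball exposed by $x^{\ast }$; estimating $\max_{c\in \widehat{C}}\Vert tu-(c-\bar{c})\Vert -t$ through the one-sided directional derivative of $\Vert \cdot \Vert $ at $u$, one sees that choosing $u$ so that $\bar{c}$ also minimizes $\langle \cdot ,g\rangle $ over $\widehat{C}$ for every $g$ in the dual face exposed by $u$ drives that difference to $0$, whence $f^{\ast }(x^{\ast })\geq \langle \bar{c},x^{\ast }\rangle =\min_{c\in \widehat{C}}\langle c,x^{\ast }\rangle $, as required.
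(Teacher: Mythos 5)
Your proposal takes a genuinely different route from the paper: the paper does not reprove the characterization from first principles, but deduces the general-norm case from the cited result \cite[Theorem 4.1]{M24} by a scaling argument (pick $k>0$ with $k\Vert\cdot\Vert\le\Vert\cdot\Vert_2$, and use $\partial(kF_C^{\Vert\cdot\Vert})=k\partial F_C^{\Vert\cdot\Vert}$, that the dual sphere of $k\Vert\cdot\Vert$ is $kS_*$, and that $\eta_{kF_C^{\Vert\cdot\Vert}}$ computed for $k\Vert\cdot\Vert$ coincides with $\eta_{F_C^{\Vert\cdot\Vert}}$), whereas you attempt a self-contained duality proof. The soft parts of your argument (the dual representation of $F_C^{\Vert\cdot\Vert}$, reduction of the maximum to $S_*$ via extreme points, the conjugate bookkeeping, and the reconstruction of $C$ in (ii)$\Rightarrow$(i)) are fine, modulo one bookkeeping slip: the representation $f(x)=\max_{x^*\in S_*}\bigl(\langle x,x^*\rangle-\eta_f(x^*)\bigr)$ is \emph{not} part of condition (2) of the theorem and must be derived from (a) via Fenchel--Young (pick $x^*\in\partial f(x)\cap S_*$ and use $f^*=\eta_f$ on $S_*$); that is a one-line fix.

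The genuine gap is exactly at the step you flag as the crux. Your sketch needs, for each $x^*\in S_*$, a minimizer $\bar c$ of $\langle\cdot,x^*\rangle$ over $\widehat C$ and a unit vector $u$ with $\langle u,x^*\rangle=1$ such that $\bar c$ simultaneously minimizes $\langle\cdot,g\rangle$ over $\widehat C$ for \emph{every} $g$ in the dual face $\{g:\Vert g\Vert_*\le1,\ \langle u,g\rangle=1\}$, and you simply posit that such a choice exists. It does not exist for a general compact convex set, and indeed the inequality you are after fails before passing to $\widehat C$: for $\Vert\cdot\Vert=\Vert\cdot\Vert_1$ in $\mathbb{R}^2$ and $C$ the Euclidean unit disk one has $F_C^{\Vert\cdot\Vert_1}=\Vert\cdot\Vert_1+\sqrt2$, so $f^*((1,0))=-\sqrt2<-1=-\sigma_C(-(1,0))$ although $(1,0)\in S_*$. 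In that example the primal face exposed by $x^*=(1,0)$ is the single vertex $u=(1,0)$, so there is no freedom whatsoever in ``choosing $u$''; the dual face exposed by $u$ is the whole edge $\{(1,g_2):|g_2|\le1\}$, and no point of the disk minimizes $\langle\cdot,(1,g_2)\rangle$ for all $|g_2|\le1$. Your argument survives only because $\widehat C=\Gamma_f$ there happens to be the $\ell_1$-ball $B(0,\sqrt2)$, whose vertex $(-\sqrt2,0)$ does absorb the whole dual face. In other words, the existence of the pair $(u,\bar c)$ is not a soft convexity fact: it is a geometric property of sets that are intersections of balls of $\Vert\cdot\Vert$ (the only feature distinguishing $\widehat C$ from an arbitrary compact convex body), and your sketch never invokes that structure. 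The directional-derivative computation is correct (with a uniformity-in-$c$ argument over the compact set $\widehat C-\bar c$ that should also be stated), but the lemma it is meant to feed is left unproved, so the implication (1)$\Rightarrow$(2)(b) is not established. Until that lemma is proved — or the problem is reduced, as in the paper, to the case of a norm dominated by $\Vert\cdot\Vert_2$ already covered by \cite{M24} — the proof is incomplete. (Note also that (2)(a) has a direct one-line proof: a norming functional of $x-\bar c$ at a farthest point $\bar c$ lies in $\partial F_C^{\Vert\cdot\Vert}(x)\cap S_*$; the delicate equality is only needed for (b).)
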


\begin{enumerate}
\item There exists a nonempty compact convex set $C\subseteq \mathbb{R}^{n}$
such that $f=F_{C}^{\left\Vert \cdot \right\Vert }$.

\item The following conditions hold:

\begin{enumerate}
\item[(a)] $\partial f(x)\cap S_{\ast }\neq \emptyset $ for every $x\in 
\mathbb{R}^{n}$,

\item[(b)] $\eta _{f}$ is finite-valued and concave.
\end{enumerate}
\end{enumerate}

\begin{proof}
We first observe that the proof of the implication (2) $\Rightarrow $ (1)
given in \cite[Theorem 4.1]{M24} is valid for an arbitrary norm, as it does
not use the assumption that $\Vert \cdot \Vert $ is less than or equal to $%
\Vert \cdot \Vert _{2}.$

To prove the converse, we will use the well known fact that all norms on $%
\mathbb{R}^{n}$ are equivalent, so that for some real number $k>0$ one has%
\begin{equation*}
k\Vert \cdot \Vert \leq \Vert \cdot \Vert _{2}.
\end{equation*}%
The farthest distance function to $C$ corresponding to $k\Vert \cdot \Vert $
is $kF_{C}^{\left\Vert \cdot \right\Vert },$ and one clearly has $\partial
\left( kF_{C}^{\left\Vert \cdot \right\Vert }\right) =k\partial
F_{C}^{\left\Vert \cdot \right\Vert }$ and $\left( k\left\Vert \cdot
\right\Vert \right) _{\ast }=\frac{1}{k}\left\Vert .\right\Vert _{\ast },$
which implies that the unit sphere corresponding to $\left( k\left\Vert
\cdot \right\Vert \right) _{\ast }$ is $kS_{\ast }.$ Therefore, assuming
(1), by \cite[Theorem 4.1, implication (1) $\Rightarrow $ (2)(a)]{M24}
applied to the norm $k\Vert \cdot \Vert ,$ we have $k\partial
F_{C}^{\left\Vert \cdot \right\Vert }\left( x\right) \cap kS_{\ast }\neq
\emptyset $ for every $x\in \mathbb{R}^{n}.$ Since $k\partial
F_{C}^{\left\Vert \cdot \right\Vert }\left( x\right) \cap kS_{\ast }=k\left(
\partial F_{C}^{\left\Vert \cdot \right\Vert }(x)\cap S_{\ast }\right) ,$ we
obtain (a). The proof of (b) follows easily from a straightforward
computation, as we will see next. Consider the function (\ref{eta})
associated with $kF_{C}^{\left\Vert \cdot \right\Vert }$ corresponding to
the norm $k\Vert \cdot \Vert .$ For every $x^{\ast }\in \mathbb{R}%
^{n}\setminus \left\{ 0\right\} ,$ it satisfies%
\begin{eqnarray*}
\eta _{kF_{C}^{\left\Vert \cdot \right\Vert }}\left( x^{\ast }\right)
&=&\left( k\Vert \cdot \Vert \right) _{\ast }\left( x^{\ast }\right) \left(
kF_{C}^{\left\Vert \cdot \right\Vert }\right) ^{\ast }\left( \frac{x^{\ast }%
}{\left( k\Vert \cdot \Vert \right) _{\ast }\left( x^{\ast }\right) }\right)
\\
&=&\frac{\Vert x^{\ast }\Vert _{\ast }}{k}\left( kF_{C}^{\left\Vert \cdot
\right\Vert }\right) ^{\ast }\left( \frac{kx^{\ast }}{\Vert x^{\ast }\Vert
_{\ast }}\right) =\Vert x^{\ast }\Vert _{\ast }\left( F_{C}^{\left\Vert
\cdot \right\Vert }\right) ^{\ast }\left( \frac{x^{\ast }}{\Vert x^{\ast
}\Vert _{\ast }}\right) \\
&=&\eta _{F_{C}^{\left\Vert \cdot \right\Vert }}\left( x^{\ast }\right) .
\end{eqnarray*}%
Since $\eta _{kF_{C}}\left( 0\right) =0=\eta _{F_{C}}\left( 0\right) ,$ we
have $\eta _{kF_{C}}=\eta _{F_{C}};$ consequently, by \cite[Theorem 4.1,
implication (1) $\Rightarrow $ (2)(b)]{M24}, the function $\eta _{kF_{C}}$
is finite-valued and concave, which shows that the implication (1) $%
\Rightarrow $ (2)(b) holds.
\end{proof}

\begin{corollary}
\label{bij}Let $\Vert \cdot \Vert $ be any norm in $\mathbb{R}^{n}$. The
mapping $C\mapsto F_{C}^{\left\Vert \cdot \right\Vert }$ is a bijection from
the set of nonempty compact convex subsets of $\mathbb{R}^{n}$ onto the set
of functions $f\colon \mathbb{R}^{n}\rightarrow \mathbb{R}$ satisfying (a)
and (b) of Theorem \ref{char farthest}. The inverse mapping is $f\mapsto
\Gamma _{f},$ with $\Gamma _{f}:=\left\{ c\in \mathbb{R}^{n}:\left\Vert
x-c\right\Vert \leq f\left( x\right) \quad \forall x\in \mathbb{R}%
^{n}\right\} .$
\end{corollary}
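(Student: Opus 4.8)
The plan is to build the corollary on Theorem \ref{char farthest}. That theorem already shows that $C\mapsto F_{C}^{\Vert\cdot\Vert}$ maps the nonempty compact convex subsets of $\mathbb{R}^{n}$ into the set of functions satisfying (a) and (b), and, via the implication (2)$\Rightarrow$(1), that it maps \emph{onto} that set. So only two things are missing: that the map is injective, and that $f\mapsto\Gamma_{f}$ inverts it. Both would follow at once from the single identity
\[
\Gamma_{F_{C}^{\Vert\cdot\Vert}}=C\qquad\text{for every nonempty compact convex }C\subseteq\mathbb{R}^{n}.
\]
Indeed, assuming it: if $F_{C}^{\Vert\cdot\Vert}=F_{D}^{\Vert\cdot\Vert}$ then $C=\Gamma_{F_{C}^{\Vert\cdot\Vert}}=\Gamma_{F_{D}^{\Vert\cdot\Vert}}=D$, so the map is injective; and, given $f$ satisfying (a) and (b), surjectivity provides a nonempty compact convex $C$ with $f=F_{C}^{\Vert\cdot\Vert}$, whence $\Gamma_{f}=\Gamma_{F_{C}^{\Vert\cdot\Vert}}=C$ (in particular $\Gamma_{f}$ lies in the domain) and therefore $F_{\Gamma_{f}}^{\Vert\cdot\Vert}=F_{C}^{\Vert\cdot\Vert}=f$. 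Thus $f\mapsto\Gamma_{f}$ is a genuine two-sided inverse and the corollary follows.

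Everything thus reduces to the displayed identity. The inclusion $C\subseteq\Gamma_{F_{C}^{\Vert\cdot\Vert}}$ is immediate, since $c\in C$ gives $\Vert x-c\Vert\le\max_{c'\in C}\Vert x-c'\Vert=F_{C}^{\Vert\cdot\Vert}(x)$ for all $x$. For the reverse inclusion I would argue by contraposition: given $c\notin C$, I must produce a point $x$ with $\Vert x-c\Vert>F_{C}^{\Vert\cdot\Vert}(x)$, equivalently $\Vert x-c\Vert>\Vert x-c'\Vert$ for every $c'\in C$; geometrically, a closed ball $B(x,\Vert x-c\Vert)$ through $c$ whose interior contains all of $C$. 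Since $C$ is compact and convex and $c\notin C$, strict separation yields $x^{\ast}\in S_{\ast}$ and $\delta>0$ with $\langle c',x^{\ast}\rangle\le\langle c,x^{\ast}\rangle-\delta$ for all $c'\in C$. The idea is to send the center off to infinity behind $c$ relative to $x^{\ast}$: take $x=c+\lambda w$ with $\Vert w\Vert=1$ and $\langle w,x^{\ast}\rangle=-1$ (such a vector exists because $\Vert x^{\ast}\Vert_{\ast}=1$ and the unit ball is symmetric), and let $\lambda\to+\infty$. Then $\Vert x-c\Vert=\lambda$, while $\Vert x-c'\Vert=\Vert\lambda w+(c-c')\Vert$, and one wants to check this is $<\lambda$ once $\lambda$ is large, uniformly over the compact set $C$.

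The step I expect to be the real obstacle is this last uniform estimate for a \emph{general} norm, where $\Vert\lambda w+(c-c')\Vert$ must be bounded \emph{above} and the shape of the unit ball genuinely matters. For the Euclidean norm it is a one-line computation: with $w=-x^{\ast}$ one has $\langle w,c-c'\rangle=-\langle x^{\ast},c-c'\rangle\le-\delta$, hence $\Vert\lambda w+(c-c')\Vert_{2}^{2}=\lambda^{2}+2\lambda\langle w,c-c'\rangle+\Vert c-c'\Vert_{2}^{2}\le\lambda^{2}-2\lambda\delta+M^{2}<\lambda^{2}$ for $\lambda>M^{2}/(2\delta)$, where $M:=\sup_{c'\in C}\Vert c-c'\Vert_{2}<\infty$. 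A cleaner way to organize the general case is the recession picture: $B\bigl(x,F_{C}^{\Vert\cdot\Vert}(x)\bigr)$ is the smallest ball centered at $x$ that contains $C$, and as $x$ recedes to infinity in a well-chosen direction this minimal enclosing ball should collapse, on bounded sets, onto a closed halfspace supporting $C$ (this is transparent for smooth and rotund norms); since $\Gamma_{f}=\bigcap_{x\in\mathbb{R}^{n}}B\bigl(x,f(x)\bigr)$, it would then get squeezed down to the intersection of all closed halfspaces containing $C$, which is $C$ itself. Carrying out this collapse with full uniformity for an arbitrary norm is the delicate point; once it is done, the corollary is immediate by the first paragraph, and one also obtains the explicit description $\Gamma_{f}=\bigcap_{x\in\mathbb{R}^{n}}B\bigl(x,f(x)\bigr)$ of the inverse map.
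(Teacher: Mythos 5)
Your reduction is the right one: surjectivity is exactly the implication (2)$\Rightarrow$(1) of Theorem \ref{char farthest}, and everything else hinges on the single identity $\Gamma_{F_{C}^{\Vert\cdot\Vert}}=C$; your Euclidean verification of that identity is correct. But the step you yourself flag as ``the delicate point'' is not merely delicate: it is false for a general norm, so the gap cannot be closed as the statement stands. Observe that $\Gamma_{F_{C}^{\Vert\cdot\Vert}}=\bigcap_{x\in\mathbb{R}^{n}}B\bigl(x,F_{C}^{\Vert\cdot\Vert}(x)\bigr)$ is precisely the intersection of \emph{all} closed balls containing $C$ (any ball $B(x,r)\supseteq C$ has $r\geq F_{C}^{\Vert\cdot\Vert}(x)$, hence contains that intersection, while each $B\bigl(x,F_{C}^{\Vert\cdot\Vert}(x)\bigr)$ is itself such a ball); call this set $\widehat{C}$. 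Since $\widehat{C}\subseteq B\bigl(x,F_{C}^{\Vert\cdot\Vert}(x)\bigr)$ for every $x$, one always has $F_{\widehat{C}}^{\Vert\cdot\Vert}=F_{C}^{\Vert\cdot\Vert}$. Consequently $\Gamma_{F_{C}^{\Vert\cdot\Vert}}=C$ holds exactly when $C$ is an intersection of closed balls, and injectivity of $C\mapsto F_{C}^{\Vert\cdot\Vert}$ on nonempty compact convex sets is equivalent to \emph{every} such set being an intersection of closed balls (the Mazur intersection property of the norm). This holds for the Euclidean norm, and your recession argument does go through whenever the norm is smooth (the supporting functional at the recession direction $w$ is then unique, which is exactly where the shape of the ball enters), but it fails for polyhedral norms.

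Concretely, in $(\mathbb{R}^{2},\Vert\cdot\Vert_{\infty})$ take $C$ the segment joining $(0,0)$ and $(1,1)$ and $D=[0,1]^{2}$. For fixed $x$ the map $t\mapsto\max\{|x_{1}-t|,|x_{2}-t|\}$ is convex, so its maximum over $[0,1]$ is attained at an endpoint, giving $F_{C}^{\Vert\cdot\Vert_{\infty}}(x)=\max\{|x_{1}|,|x_{2}|,|x_{1}-1|,|x_{2}-1|\}=F_{D}^{\Vert\cdot\Vert_{\infty}}(x)$ for all $x$. Thus two distinct nonempty compact convex sets share the same farthest distance function; equivalently, every axis-parallel square containing the two endpoints of $C$ contains all of $D$, so $\Gamma_{F_{C}^{\Vert\cdot\Vert_{\infty}}}=D\neq C$, and the reverse inclusion you were trying to prove fails. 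Hence for such norms the mapping is not injective and $f\mapsto\Gamma_{f}$ returns the ball hull rather than the original set, so no argument, along your route or any other, can establish the corollary for an arbitrary norm without an extra hypothesis (e.g.\ smoothness of the norm, or restricting the domain to sets that are intersections of closed balls). The paper offers no proof of this corollary, treating it as immediate from Theorem \ref{char farthest}, so there is nothing there that resolves the point either; your instinct that the uniform collapse estimate is the real obstacle was exactly right, and for non-smooth norms the obstacle is genuine rather than technical.
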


We next give a relation between the farthest distance function to a strongly
convex set with respect to $R>0$ and the ordinary distance function to its $%
\rho _{R}$-polar set.

\begin{proposition}
\label{ub for farthest}If $C\subseteq \mathbb{R}^{n}$ is strongly convex
with respect to $R>0,$ then%
\begin{equation*}
F_{C}^{\left\Vert \cdot \right\Vert }\leq d_{C^{\rho _{R}}}^{\left\Vert
\cdot \right\Vert }+R.
\end{equation*}
\end{proposition}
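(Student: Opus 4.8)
The plan is to derive the bound directly from the triangle inequality, using only the description of the $\rho _{R}$-polar in (\ref{polar}). First I would dispose of the degenerate cases: if $C=\emptyset $ there is nothing to prove, and if $C=\mathbb{R}^{n}$ then $C^{\rho _{R}}=\emptyset $, so $d_{C^{\rho _{R}}}^{\left\Vert \cdot \right\Vert }\equiv +\infty $ and the inequality is trivial. In the remaining case, $C$ is a nonempty intersection of a nonempty family of closed balls of radius $R$, so $C$ is compact (making $F_{C}^{\left\Vert \cdot \right\Vert }$ well defined) and, by (\ref{polar}), $C^{\rho _{R}}\neq \emptyset $; moreover $C^{\rho _{R}}=\bigcap _{x\in C}B\left( x,R\right) $ is closed and bounded, hence compact.

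The core of the argument is then a single line. Fix $x\in \mathbb{R}^{n}$. Since $C^{\rho _{R}}$ is a nonempty compact set, $d_{C^{\rho _{R}}}^{\left\Vert \cdot \right\Vert }\left( x\right) $ is attained at some $y\in C^{\rho _{R}}$. By (\ref{polar}), $y\in C^{\rho _{R}}$ means precisely that $\left\Vert c-y\right\Vert \leq R$ for every $c\in C$, i.e. $C\subseteq B\left( y,R\right) $. Hence, for every $c\in C$,
\[
\left\Vert x-c\right\Vert \leq \left\Vert x-y\right\Vert +\left\Vert y-c\right\Vert \leq d_{C^{\rho _{R}}}^{\left\Vert \cdot \right\Vert }\left( x\right) +R .
\]
Taking the maximum over $c\in C$ gives $F_{C}^{\left\Vert \cdot \right\Vert }\left( x\right) \leq d_{C^{\rho _{R}}}^{\left\Vert \cdot \right\Vert }\left( x\right) +R$, and since $x$ is arbitrary this is the claimed inequality.

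The main obstacle, such as it is, is not analytic but bookkeeping: one must make sure both sides of the inequality are meaningful, and here strong convexity of $C$ is used only to secure that $C$ is compact and that $C^{\rho _{R}}$ is a nonempty compact set (so the minimum defining $d_{C^{\rho _{R}}}^{\left\Vert \cdot \right\Vert }$ is attained); the inequality itself is pure triangle inequality and requires no convexity of $C$. If one wishes to avoid invoking attainment, one can instead choose, for each $\varepsilon >0$, a point $y_{\varepsilon }\in C^{\rho _{R}}$ with $\left\Vert x-y_{\varepsilon }\right\Vert \leq d_{C^{\rho _{R}}}^{\left\Vert \cdot \right\Vert }\left( x\right) +\varepsilon $, run the same estimate, and let $\varepsilon \downarrow 0$.
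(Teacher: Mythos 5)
Your argument is correct and is essentially the paper's own proof: the paper likewise picks the nearest point $\overline{x}$ of $C^{\rho _{R}}$ to $x$ (your attainment point $y$) and applies the triangle inequality together with the fact that every point of $C^{\rho _{R}}$ lies within $R$ of every point of $C$. Your only additions are the explicit treatment of the degenerate cases $C=\emptyset$ and $C=\mathbb{R}^{n}$ and the remark that attainment can be replaced by an $\varepsilon$-argument, which are harmless refinements of the same approach.
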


\begin{proof}
For any $x\in C,$ denoting by $\overline{x}$ the projection of $x$ onto $%
C^{\rho _{R}},$ we have%
\begin{eqnarray*}
F_{C}^{\left\Vert \cdot \right\Vert }\left( x\right) &=&\max_{y\in
C}\left\Vert x-y\right\Vert \leq \max_{y\in C}\left\{ \left\Vert x-\overline{%
x}\right\Vert +\left\Vert \overline{x}-y\right\Vert \right\} \\
&=&d_{C^{\rho _{R}}}^{\left\Vert \cdot \right\Vert }\left( x\right)
+\max_{y\in C}\left\Vert \overline{x}-y\right\Vert \leq d_{C^{\rho
_{R}}}^{\left\Vert \cdot \right\Vert }\left( x\right) +R.
\end{eqnarray*}
\end{proof}

\bigskip

The following immediate corollary shows that the difference between the
farthest distance function to a strongly convex set with respect to $R>0$
and that to its $\rho _{R}$-polar is uniformly bounded.

\begin{corollary}
If $C\subseteq \mathbb{R}^{n}$ is strongly convex with respect to $R>0,$
then $F_{C^{\rho _{R}}}^{\left\Vert \cdot \right\Vert }-F_{C}^{\left\Vert
\cdot \right\Vert }$ is bounded and%
\begin{equation}
\left\Vert F_{C^{\rho _{R}}}^{\left\Vert \cdot \right\Vert
}-F_{C}^{\left\Vert \cdot \right\Vert }\right\Vert _{\infty }\leq R.
\label{unif bounded}
\end{equation}
\end{corollary}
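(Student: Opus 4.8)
The plan is to obtain this directly from Proposition \ref{ub for farthest}, applied both to $C$ and to its $\rho_R$-polar $C^{\rho_R}$, together with the trivial pointwise estimate $d_A^{\left\Vert \cdot \right\Vert}\le F_A^{\left\Vert \cdot \right\Vert}$ valid for every nonempty compact convex set $A$ (the minimum of $\left\Vert x-\cdot\right\Vert$ over $A$ is at most its maximum). First I would check that $F_{C^{\rho_R}}^{\left\Vert \cdot \right\Vert}$ makes sense: since $C$ is strongly convex with respect to $R$ and bounded, the family of radius-$R$ balls whose intersection is $C$ must be nonempty, so $C$ lies in one of them and $C^{\rho_R}$ is nonempty; by (\ref{polar}) it is closed and convex, and it is bounded, being contained in $B(x,R)$ for any fixed $x\in C$. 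Hence $C^{\rho_R}$ is a nonempty compact convex set. Recall also that $C^{\rho_R}$ is itself strongly convex with respect to $R$ (as noted after (\ref{polar})) and that $C^{\rho_R\rho_R}=C$ by Theorem \ref{2nd polar}.

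Applying Proposition \ref{ub for farthest} to $C$ gives $F_C^{\left\Vert \cdot \right\Vert}\le d_{C^{\rho_R}}^{\left\Vert \cdot \right\Vert}+R$, and combining with $d_{C^{\rho_R}}^{\left\Vert \cdot \right\Vert}\le F_{C^{\rho_R}}^{\left\Vert \cdot \right\Vert}$ yields
\[
F_C^{\left\Vert \cdot \right\Vert}-F_{C^{\rho_R}}^{\left\Vert \cdot \right\Vert}\le R .
\]
Applying the same proposition to $C^{\rho_R}$ (legitimate, since it is strongly convex with respect to $R$), whose $\rho_R$-polar is $C^{\rho_R\rho_R}=C$, gives $F_{C^{\rho_R}}^{\left\Vert \cdot \right\Vert}\le d_C^{\left\Vert \cdot \right\Vert}+R\le F_C^{\left\Vert \cdot \right\Vert}+R$, i.e.
\[
F_{C^{\rho_R}}^{\left\Vert \cdot \right\Vert}-F_C^{\left\Vert \cdot \right\Vert}\le R .
\]
Together, these two inequalities give $\bigl|F_{C^{\rho_R}}^{\left\Vert \cdot \right\Vert}(x)-F_C^{\left\Vert \cdot \right\Vert}(x)\bigr|\le R$ for every $x\in\mathbb{R}^n$, which is precisely the boundedness of the difference and the estimate (\ref{unif bounded}).

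There is essentially no obstacle here beyond bookkeeping; the only points worth spelling out are that $C^{\rho_R}$ is genuinely a nonempty compact convex set (so that the right-hand farthest distance function is defined) and that Proposition \ref{ub for farthest} is indeed applicable to $C^{\rho_R}$ — both handled above — plus the elementary observation $d_A^{\left\Vert \cdot \right\Vert}\le F_A^{\left\Vert \cdot \right\Vert}$.
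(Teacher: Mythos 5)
Your proof is correct and follows essentially the same route as the paper: apply Proposition \ref{ub for farthest} to $C$ and to $C^{\rho _{R}}$ (using $C^{\rho _{R}\rho _{R}}=C$ from Theorem \ref{2nd polar}), together with the elementary bound $d_{A}^{\left\Vert \cdot \right\Vert }\leq F_{A}^{\left\Vert \cdot \right\Vert }$, to get the two one-sided inequalities. The extra checks you make (that $C^{\rho _{R}}$ is nonempty, compact and convex) are left implicit in the paper but do not change the argument.
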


\begin{proof}
By Proposition \ref{ub for farthest}, we have%
\begin{equation}
F_{C}^{\left\Vert \cdot \right\Vert }\leq d_{C^{\rho _{R}}}^{\left\Vert
\cdot \right\Vert }+R\leq F_{C^{\rho _{R}}}^{\left\Vert \cdot \right\Vert
}+R.  \label{ineq 1}
\end{equation}%
Hence, since $C^{\rho _{R}}$ is strongly convex with respect to $R,$ using
Theorem \ref{2nd polar} we obtain%
\begin{equation}
F_{C^{\rho _{R}}}^{\left\Vert \cdot \right\Vert }\leq F_{C^{\rho _{R}\rho
_{R}}}^{\left\Vert \cdot \right\Vert }+R=F_{C}^{\left\Vert \cdot \right\Vert
}+R.  \label{ineq 2}
\end{equation}%
Combining (\ref{ineq 1}) with (\ref{ineq 2}) yields (\ref{unif bounded}).
\end{proof}

\bigskip

Our next result shows how to obtain $F_{C^{\rho _{R}}}^{\left\Vert \cdot
\right\Vert }$ from $F_{C}^{\left\Vert \cdot \right\Vert }$.

\begin{proposition}
\label{farth polar}If $C\subseteq \mathbb{R}^{n}$ is strongly convex with
respect to $R>0,$ then%
\begin{equation*}
F_{C^{\rho _{R}}}^{\left\Vert \cdot \right\Vert }\left( x\right) =\max
\left\{ \left\Vert x-y\right\Vert :F_{C}^{\left\Vert \cdot \right\Vert
}\left( y\right) \leq R\right\} \quad \forall x\in \mathbb{R}^{n}.
\end{equation*}
\end{proposition}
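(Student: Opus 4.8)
The plan is to recognize the sublevel set $\left\{ y\in \mathbb{R}^{n}:F_{C}^{\left\Vert \cdot \right\Vert }\left( y\right) \leq R\right\} $ as the polar set $C^{\rho _{R}}$, after which the asserted formula is just the definition of $F_{C^{\rho _{R}}}^{\left\Vert \cdot \right\Vert }$. First I would note that, for every $y\in \mathbb{R}^{n}$,
\begin{equation*}
F_{C}^{\left\Vert \cdot \right\Vert }\left( y\right) \leq R\Longleftrightarrow \left\Vert y-c\right\Vert \leq R\ \text{ for all }c\in C\Longleftrightarrow y\in \dbigcap\nolimits_{c\in C}B\left( c,R\right) ,
\end{equation*}
and the rightmost set equals $C^{\rho _{R}}$ by \eqref{polar}; hence $\left\{ y\in \mathbb{R}^{n}:F_{C}^{\left\Vert \cdot \right\Vert }\left( y\right) \leq R\right\} =C^{\rho _{R}}$.

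Next I would check that $C^{\rho _{R}}$ is a nonempty compact convex set, so that $F_{C^{\rho _{R}}}^{\left\Vert \cdot \right\Vert }$ is well defined. It is convex and closed as an intersection of closed balls, and it is bounded, since $C\neq \emptyset $ forces $C^{\rho _{R}}\subseteq B\left( c_{0},R\right) $ for any $c_{0}\in C$; it is nonempty, since by Theorem \ref{2nd polar} the equality $C^{\rho _{R}}=\emptyset $ would give $C=C^{\rho _{R}\rho _{R}}=\emptyset ^{\rho _{R}}=\mathbb{R}^{n}$, contradicting the (implicit) compactness of $C$. Equivalently, a nonempty compact set that is strongly convex with respect to $R$ must be contained in at least one closed ball of radius $R$, whose center then belongs to $C^{\rho _{R}}$.

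Finally, I would put the two facts together: by the very definition of the farthest distance function, $F_{C^{\rho _{R}}}^{\left\Vert \cdot \right\Vert }\left( x\right) =\max_{y\in C^{\rho _{R}}}\left\Vert x-y\right\Vert $, so
\begin{equation*}
F_{C^{\rho _{R}}}^{\left\Vert \cdot \right\Vert }\left( x\right) =\max_{y\in C^{\rho _{R}}}\left\Vert x-y\right\Vert =\max \left\{ \left\Vert x-y\right\Vert :F_{C}^{\left\Vert \cdot \right\Vert }\left( y\right) \leq R\right\} ,
\end{equation*}
the maximum being attained because $C^{\rho _{R}}$ is compact. No step presents a genuine difficulty; the only point deserving a moment's care is the well-definedness of $F_{C^{\rho _{R}}}^{\left\Vert \cdot \right\Vert }$, and one sees that the strong convexity of $C$ enters precisely---and only---to guarantee that $C^{\rho _{R}}$ is nonempty.
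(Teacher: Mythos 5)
Your proof is correct and takes essentially the same route as the paper: the paper's one-line argument likewise rewrites $F_{C^{\rho _{R}}}^{\left\Vert \cdot \right\Vert }\left( x\right) =\max \left\{ \left\Vert x-y\right\Vert :\left\Vert y-z\right\Vert \leq R\ \forall z\in C\right\} $ and identifies the constraint with $F_{C}^{\left\Vert \cdot \right\Vert }\left( y\right) \leq R$, i.e.\ with $y\in C^{\rho _{R}}$. Your additional verification that $C^{\rho _{R}}$ is nonempty, compact and convex (so the maximum is attained and $F_{C^{\rho _{R}}}^{\left\Vert \cdot \right\Vert }$ is well defined) is a sound tidying-up of details the paper leaves implicit.
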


\begin{proof}
For every $x\in \mathbb{R}^{n},$ we have%
\begin{eqnarray*}
F_{C^{\rho _{R}}}^{\left\Vert \cdot \right\Vert }\left( x\right) &=&\max
\left\{ \left\Vert x-y\right\Vert :\left\Vert y-z\right\Vert \leq R\quad
\forall z\in C\right\} \\
&=&\max \left\{ \left\Vert x-y\right\Vert :F_{C}^{\left\Vert \cdot
\right\Vert }\left( y\right) \leq R\right\} .
\end{eqnarray*}
\end{proof}

\bigskip

The following theorem characterizes strongly convex sets in terms of their
associated farthest distance functions. For several totally different
characterizations, see \cite[Theorems 2.2 and 2.3]{NNT23}.

\begin{theorem}
A nonempty compact convex set $C\subseteq \mathbb{R}^{n}$ is strongly convex
with respect to $R>0$ if and only if for every $x\in \mathbb{R}^{n}$ one has%
\begin{equation}
F_{C}^{\left\Vert \cdot \right\Vert }\left( x\right) =\max \left\{
\left\Vert x-y\right\Vert :\left\Vert y-z\right\Vert \leq R\quad \forall
z\in \left( F_{C}^{\left\Vert \cdot \right\Vert }\right) ^{-1}\left( \left[
0,R\right] \right) \right\} .  \label{funct eq}
\end{equation}
\end{theorem}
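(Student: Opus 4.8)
The plan is to translate the functional equation (\ref{funct eq}) into a statement about the second $\rho _{R}$-polar of $C$ and then to invoke Theorem \ref{2nd polar}. The first step is to identify the auxiliary set appearing in (\ref{funct eq}): since $F_{C}^{\left\Vert \cdot \right\Vert }\geq 0$ everywhere, one has $\left( F_{C}^{\left\Vert \cdot \right\Vert }\right) ^{-1}\left( \left[ 0,R\right] \right) =\left\{ y\in \mathbb{R}^{n}:\left\Vert y-c\right\Vert \leq R\text{ for all }c\in C\right\} =C^{\rho _{R}}$. Consequently the feasible set for $y$ in the right-hand side of (\ref{funct eq}) is $\left\{ y:\left\Vert y-z\right\Vert \leq R\text{ for all }z\in C^{\rho _{R}}\right\} =C^{\rho _{R}\rho _{R}}$. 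If moreover $C^{\rho _{R}}\neq \emptyset $, then, by (\ref{polar}) applied to $C^{\rho _{R}}$ in place of $C$, the set $C^{\rho _{R}\rho _{R}}$ is an intersection of closed balls of radius $R$ each of which contains it; being nonempty, closed, convex and bounded, it is a nonempty compact convex set, and the right-hand side of (\ref{funct eq}) is precisely $F_{C^{\rho _{R}\rho _{R}}}^{\left\Vert \cdot \right\Vert }\left( x\right) $. Thus, apart from the degenerate case $C^{\rho _{R}}=\emptyset $, condition (\ref{funct eq}) is equivalent to the equality of functions $F_{C}^{\left\Vert \cdot \right\Vert }=F_{C^{\rho _{R}\rho _{R}}}^{\left\Vert \cdot \right\Vert }$ on $\mathbb{R}^{n}$.

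For the "only if" direction I would note that, $C$ being compact and hence bounded, it cannot be the intersection of the empty family of balls, so Corollary \ref{C in terms of pol} gives both $C^{\rho _{R}}\neq \emptyset $ and $C=\bigcap_{x\in C^{\rho _{R}}}B\left( x,R\right) =C^{\rho _{R}\rho _{R}}$ (alternatively, combine Theorem \ref{2nd polar} with the observation that $C^{\rho _{R}}\neq \emptyset $). By the first step, the right-hand side of (\ref{funct eq}) then equals $F_{C^{\rho _{R}\rho _{R}}}^{\left\Vert \cdot \right\Vert }\left( x\right) =F_{C}^{\left\Vert \cdot \right\Vert }\left( x\right) $, which is exactly (\ref{funct eq}).

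For the "if" direction I would assume (\ref{funct eq}) holds for every $x$ and first rule out $C^{\rho _{R}}=\emptyset $: in that case the constraint "$\left\Vert y-z\right\Vert \leq R$ for all $z\in C^{\rho _{R}}$" is vacuous, so the right-hand side of (\ref{funct eq}) equals $\sup_{y\in \mathbb{R}^{n}}\left\Vert x-y\right\Vert =+\infty $, contradicting the finiteness of $F_{C}^{\left\Vert \cdot \right\Vert }\left( x\right) $. Hence $C^{\rho _{R}}\neq \emptyset $, so $C^{\rho _{R}\rho _{R}}$ is a nonempty compact convex set and, by the first step, (\ref{funct eq}) reads $F_{C}^{\left\Vert \cdot \right\Vert }=F_{C^{\rho _{R}\rho _{R}}}^{\left\Vert \cdot \right\Vert }$. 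Since $C$ and $C^{\rho _{R}\rho _{R}}$ are both nonempty compact convex, the injectivity of the map $D\mapsto F_{D}^{\left\Vert \cdot \right\Vert }$ recorded in Corollary \ref{bij} (equivalently, apply the inverse map $f\mapsto \Gamma _{f}$ to both sides) forces $C=C^{\rho _{R}\rho _{R}}$, and then Theorem \ref{2nd polar} yields that $C$ is strongly convex with respect to $R$.

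All the manipulations are essentially formal once this polarity dictionary is set up; the only points that need care are the verification that $C^{\rho _{R}\rho _{R}}$ is compact whenever $C^{\rho _{R}}\neq \emptyset $ (so that the maximum in (\ref{funct eq}) is genuinely attained and Corollary \ref{bij} applies) and the disposal of the degenerate case $C^{\rho _{R}}=\emptyset $, which is precisely the place where the compactness of $C$ — equivalently, the finiteness of $F_{C}^{\left\Vert \cdot \right\Vert }$ — is used.
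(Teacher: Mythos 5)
Your proof is correct and follows essentially the same route as the paper: identify $\left( F_{C}^{\left\Vert \cdot \right\Vert }\right) ^{-1}\left( \left[ 0,R\right] \right) =C^{\rho _{R}}$, recognize the right-hand side of (\ref{funct eq}) as $F_{C^{\rho _{R}\rho _{R}}}^{\left\Vert \cdot \right\Vert }$, use Theorem \ref{2nd polar} for the ``only if'' part and the injectivity recorded in Corollary \ref{bij} for the ``if'' part. Your treatment is in fact slightly more careful than the paper's, since you explicitly dispose of the degenerate case $C^{\rho _{R}}=\emptyset $ and verify the compactness of $C^{\rho _{R}\rho _{R}}$ needed to apply Corollary \ref{bij}.
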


\begin{proof}
If $C$ is strongly convex with respect to $R,$ then, using Theorem \ref{2nd
polar} and the obvious equality $C^{\rho _{R}}=F_{C}^{\left\Vert \cdot
\right\Vert }\left[ 0,R\right] $ (which is, essentially, the first equality
in \cite[Proposition 3.2.(a)]{NNT23}), for $x\in C$ we obtain%
\begin{eqnarray*}
F_{C}^{\left\Vert \cdot \right\Vert }\left( x\right) &=&\max \left\{
\left\Vert x-y\right\Vert :y\in C^{\rho _{R}\rho _{R}}\right\} \\
&=&\max \left\{ \left\Vert x-y\right\Vert :\left\Vert y-z\right\Vert \leq
R\quad \forall z\in C^{\rho _{R}}\right\} \\
&=&\max \left\{ \left\Vert x-y\right\Vert :\left\Vert y-z\right\Vert \leq
R\quad \forall z\in \left( F_{C}^{\left\Vert \cdot \right\Vert }\right)
^{-1}\left( \left[ 0,R\right] \right) \right\} .
\end{eqnarray*}%
Conversely, (\ref{funct eq}) means that $F_{C}^{\left\Vert \cdot \right\Vert
}=F_{G}^{\left\Vert \cdot \right\Vert },$ where $G:=\dbigcap\limits_{z\in
\left( F_{C}^{\left\Vert \cdot \right\Vert }\right) ^{-1}\left( \left[ 0,R%
\right] \right) }B\left( \left[ 0,R\right] \right) ,$ which, by Corollary %
\ref{bij}, means in turn that $C=G,$ thus showing that $C$ is strongly
convex with respect to $R.$
\end{proof}

\bigskip

It is interesting to observe that, according to (\ref{funct eq}), the
function $F_{C}^{\left\Vert \cdot \right\Vert }$ is fully determined by its $%
R$-sublevel set $\left( F_{C}^{\left\Vert \cdot \right\Vert }\right)
^{-1}\left( \left[ 0,R\right] \right) .$

Using Theorem \ref{char farthest}, we easily obtain the following
characterization of farthest distance functions to strongly convex sets.

\begin{corollary}
\label{char farthest str conv}Let $\Vert \cdot \Vert $ be any norm in $%
\mathbb{R}^{n}$. For $f\colon \mathbb{R}^{n}\rightarrow \mathbb{R},$ the
following statements are equivalent:
\end{corollary}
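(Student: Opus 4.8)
The plan is to read off statement~(2) of the corollary as: conditions~(a) and~(b) of Theorem~\ref{char farthest} hold, and in addition $f$ satisfies the functional equation
\[
f(x)=\max\left\{\|x-y\|:\|y-z\|\le R\ \ \forall z\in f^{-1}([0,R])\right\}\qquad\forall x\in\mathbb{R}^{n},
\]
i.e.\ exactly~(\ref{funct eq}) with $f$ in place of $F_{C}^{\left\Vert \cdot \right\Vert }$. The proof then simply combines Theorem~\ref{char farthest} with the theorem immediately preceding this corollary (the one characterizing strongly convex compact convex sets through~(\ref{funct eq})), and is essentially a matter of bookkeeping.

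For the implication (1)$\Rightarrow$(2): if $f=F_{C}^{\left\Vert \cdot \right\Vert }$ with $C$ nonempty, compact, convex and strongly convex with respect to $R$, then Theorem~\ref{char farthest} gives~(a) and~(b) at once, while the preceding theorem applied to $C$ gives~(\ref{funct eq}), which is the displayed functional equation for $f$.

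For the implication (2)$\Rightarrow$(1): from~(a),~(b) and Theorem~\ref{char farthest} (equivalently, from Corollary~\ref{bij}) there is a nonempty compact convex set $C$, namely $C=\Gamma_{f}$, with $f=F_{C}^{\left\Vert \cdot \right\Vert }$; substituting this identity into the functional equation turns it into~(\ref{funct eq}), and the converse half of the preceding theorem then yields that $C$ is strongly convex with respect to $R$, which is~(1).

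I do not expect a genuine obstacle here, since all the analytic substance already lives in Theorem~\ref{char farthest} and in the preceding theorem; the corollary is just their conjunction. The only point deserving a line of care is that in~(1) the set $f^{-1}([0,R])$ over which the maximum is taken is nonempty: a nonempty compact set that is strongly convex with respect to $R$ is contained in at least one closed ball of radius $R$, so, via the identity $C^{\rho_{R}}=\bigl(F_{C}^{\left\Vert \cdot \right\Vert }\bigr)^{-1}([0,R])$ used in the preceding proof, its $\rho_{R}$-polar is nonempty and the maximum is well posed.
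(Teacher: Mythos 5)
Your proposal is correct and follows exactly the route the paper intends: the paper gives no separate proof, stating only that the corollary follows easily from Theorem \ref{char farthest} (for conditions (a) and (b)) combined with the preceding theorem characterizing strong convexity via (\ref{funct eq}) (for condition (c)), which is precisely your bookkeeping argument in both directions. Your extra remark on the nonemptiness of $f^{-1}([0,R])$, via $C^{\rho_R}=\bigl(F_C^{\left\Vert \cdot \right\Vert }\bigr)^{-1}([0,R])$, is a sensible clarification consistent with the paper.
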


\begin{enumerate}
\item There exists a nonempty strongly convex set $C\subseteq \mathbb{R}^{n}$
with respect to $R>0$ such that $f=F_{C}^{\left\Vert \cdot \right\Vert }$.

\item The following conditions hold:

\begin{enumerate}
\item[(a)] $\partial f(x)\cap S_{\ast }\neq \emptyset $ for every $x\in 
\mathbb{R}^{n}$,

\item[(b)] $\eta _{f}$ is finite-valued and concave.

\item[(c)] $f\left( x\right) =\max \left\{ \left\Vert x-y\right\Vert
:\left\Vert y-z\right\Vert \leq R\quad \forall z\in f^{-1}\left[ 0,R\right]
\right\} $ for every $x\in \mathbb{R}^{n}.$
\end{enumerate}
\end{enumerate}

\begin{corollary}
\label{bij 2}Let $\Vert \cdot \Vert $ be any norm in $\mathbb{R}^{n}$. The
mapping $C\mapsto F_{C}^{\left\Vert \cdot \right\Vert }$ is a bijection from
the set of nonempty strongly convex set $C\subseteq \mathbb{R}^{n}$ with
respect to $R>0$ onto the set of functions $f\colon \mathbb{R}%
^{n}\rightarrow \mathbb{R}$ satisfying (a), (b) and (c) of Corollary \ref%
{char farthest}.
\end{corollary}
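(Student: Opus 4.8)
The plan is to deduce this directly from Corollary \ref{bij} together with Corollary \ref{char farthest str conv}, so the proof is essentially pure bookkeeping. First I would observe that a nonempty strongly convex set $C\subseteq\mathbb{R}^{n}$ with respect to $R>0$ whose farthest distance function is real-valued is automatically a nonempty \emph{compact} convex set: since $F_{C}^{\left\Vert \cdot \right\Vert }$ is finite-valued, $C$ is bounded, hence it cannot be all of $\mathbb{R}^{n}$; therefore it is an intersection of a nonempty collection of closed balls of radius $R$, which makes it closed, bounded and convex. Consequently the family $\mathcal{S}_{R}$ of all such sets is a subset of the domain of the bijection in Corollary \ref{bij}, so the restriction of $C\mapsto F_{C}^{\left\Vert \cdot \right\Vert }$ to $\mathcal{S}_{R}$ is injective, and its inverse is the restriction of $f\mapsto\Gamma_{f}$ from Corollary \ref{bij}.

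Next I would identify the image of this restricted map with the set $\mathcal{F}_{R}$ of functions $f\colon\mathbb{R}^{n}\to\mathbb{R}$ satisfying (a), (b) and (c) of Corollary \ref{char farthest str conv}. One inclusion is the implication (1)$\Rightarrow$(2) of that corollary: if $C\in\mathcal{S}_{R}$, then $F_{C}^{\left\Vert \cdot \right\Vert }$ satisfies (a)--(c). The reverse inclusion is the implication (2)$\Rightarrow$(1): if $f$ satisfies (a)--(c), then there is a nonempty strongly convex set $C$ with respect to $R$ with $f=F_{C}^{\left\Vert \cdot \right\Vert }$; this $C$ lies in $\mathcal{S}_{R}$, and by the injectivity already noted it is the unique preimage of $f$ (indeed $C=\Gamma_{f}$). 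Hence the restricted map is a bijection from $\mathcal{S}_{R}$ onto $\mathcal{F}_{R}$, which is exactly the claim.

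I do not expect any real obstacle here: the substantive content (that (a)--(c) characterize farthest distance functions of strongly convex sets, and that distinct compact convex sets have distinct farthest distance functions) is already contained in Corollaries \ref{bij} and \ref{char farthest str conv}. The only points deserving a sentence of care are the reduction from ``nonempty strongly convex'' to ``nonempty compact convex'' — which follows from the real-valuedness of $f$ ruling out $C=\mathbb{R}^{n}$ — and the remark that the inverse map is still given by $f\mapsto\Gamma_{f}$.
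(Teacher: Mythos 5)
Your argument is correct and follows the same route the paper intends: the corollary is stated there as an immediate consequence of Corollary \ref{bij} (injectivity, with inverse $f\mapsto\Gamma_{f}$) and Corollary \ref{char farthest str conv} (identification of the image), which is exactly your bookkeeping. Your extra remark that a nonempty strongly convex set with real-valued farthest distance function must be compact (thereby excluding $C=\mathbb{R}^{n}$, which is strongly convex as the empty intersection) is a worthwhile clarification that the paper leaves implicit.
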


We next consider the transformation that assigns to the farthest distance
function $F_{C}^{\left\Vert \cdot \right\Vert }$ to a strongly convex set $C$
with respect to $R>0$ the farthest distance function $F_{C^{\rho
_{R}}}^{\left\Vert \cdot \right\Vert }$ to its $\rho _{R}$-polar. In this
regard. combining Corollary \ref{char farthest str conv} with Theorem \ref%
{char farthest}, one obtains the following result:

\begin{theorem}
Let $\Vert \cdot \Vert $ be any norm in $\mathbb{R}^{n}$. The mapping $%
f\mapsto f_{R},$ with $f_{R}:\mathbb{R}^{n}\rightarrow \mathbb{R}$ defined by%
\begin{equation*}
f_{R}\left( y\right) :=\max \left\{ \left\Vert y-x\right\Vert :f\left(
x\right) \leq R\right\} ,
\end{equation*}%
is an involution of the set of functions $f\colon \mathbb{R}^{n}\rightarrow 
\mathbb{R}$ satisfying (a), (b) and (c) of Corollary \ref{char farthest}.
\end{theorem}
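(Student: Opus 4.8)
The plan is to transport the operation $f\mapsto f_R$ to the level of sets, where it turns into $\rho_R$-polarity, and then invoke that $\rho_R$-polarity is an involution on strongly convex sets with respect to $R$.

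Concretely, I would first recall from Corollary \ref{bij 2} that the collection $\mathcal{F}$ of functions $f\colon\mathbb{R}^n\to\mathbb{R}$ satisfying (a), (b) and (c) of Corollary \ref{char farthest str conv} is precisely $\{F_C^{\left\Vert\cdot\right\Vert}:C\subseteq\mathbb{R}^n\text{ nonempty and strongly convex with respect to }R\}$, and that $C\mapsto F_C^{\left\Vert\cdot\right\Vert}$ is a bijection of this class of sets onto $\mathcal{F}$. I would also note that every such $C$ is in fact compact: it is closed and convex, being an intersection of closed balls, and it is bounded because $F_C^{\left\Vert\cdot\right\Vert}$ is real-valued; in particular the family of radius-$R$ balls defining it is nonempty (else $C=\mathbb{R}^n$, which is unbounded), so $C$ is contained in a closed ball of radius $R$, and hence $C^{\rho_R}\neq\emptyset$.

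Now take $f\in\mathcal{F}$, say $f=F_C^{\left\Vert\cdot\right\Vert}$ with $C$ nonempty (compact) and strongly convex with respect to $R$. Since $C$ is strongly convex with respect to $R$, Proposition \ref{farth polar} gives, for every $y\in\mathbb{R}^n$,
\begin{equation*}
f_R(y)=\max\left\{\left\Vert y-x\right\Vert:F_C^{\left\Vert\cdot\right\Vert}(x)\leq R\right\}=F_{C^{\rho_R}}^{\left\Vert\cdot\right\Vert}(y),
\end{equation*}
so $f_R=F_{C^{\rho_R}}^{\left\Vert\cdot\right\Vert}$. By $(\ref{polar})$ the set $C^{\rho_R}$ is strongly convex with respect to $R$; it is nonempty by the preceding remark, and bounded since $C^{\rho_R}\subseteq B(c,R)$ for any $c\in C$. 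Hence $C^{\rho_R}$ is a nonempty compact strongly convex set with respect to $R$, so $f_R=F_{C^{\rho_R}}^{\left\Vert\cdot\right\Vert}\in\mathcal{F}$ by Corollary \ref{char farthest str conv}; thus $f\mapsto f_R$ maps $\mathcal{F}$ into itself.

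To finish, I would apply Proposition \ref{farth polar} a second time, now to the strongly convex set $C^{\rho_R}$, obtaining $(f_R)_R=F_{(C^{\rho_R})^{\rho_R}}^{\left\Vert\cdot\right\Vert}=F_{C^{\rho_R\rho_R}}^{\left\Vert\cdot\right\Vert}$, and then invoke Theorem \ref{2nd polar}, which gives $C^{\rho_R\rho_R}=C$ because $C$ is strongly convex with respect to $R$; therefore $(f_R)_R=F_C^{\left\Vert\cdot\right\Vert}=f$, so $f\mapsto f_R$ is an involution of $\mathcal{F}$. Equivalently, once Proposition \ref{farth polar} shows that $f\mapsto f_R$ corresponds, under the bijection of Corollary \ref{bij 2}, to $C\mapsto C^{\rho_R}$, the involution property is immediate from Corollary \ref{inv}. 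I expect the only step needing real care to be the routine but necessary check that $C^{\rho_R}$ is nonempty and bounded, so that it lies in the domain of the bijection and the maximum defining $f_R$ is actually attained; everything else is a direct chaining of Proposition \ref{farth polar}, Theorem \ref{2nd polar}, and Corollaries \ref{bij 2}, \ref{char farthest str conv} and \ref{inv}.
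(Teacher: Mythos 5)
Your proposal is correct and follows essentially the same route as the paper: identify $f_R$ with $F_{C^{\rho _{R}}}^{\left\Vert \cdot \right\Vert }$ for the unique strongly convex $C$ with $F_{C}^{\left\Vert \cdot \right\Vert }=f$ (via Proposition \ref{farth polar} and the bijection of Corollaries \ref{bij}/\ref{bij 2}), and then conclude $\left( f_{R}\right) _{R}=f$ from $C^{\rho _{R}\rho _{R}}=C$ (Theorem \ref{2nd polar}, equivalently Corollary \ref{inv}). Your explicit verification that $C^{\rho _{R}}$ is nonempty and compact, so that $f_{R}$ indeed stays in the class and the maximum is attained, is a check the paper leaves implicit, but it does not change the argument.
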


\begin{proof}
For every $y\in C,$ by Corollary \ref{bij} we have%
\begin{eqnarray*}
f_{R}\left( y\right) &=&\max \left\{ \left\Vert y-x\right\Vert :f^{-1}\left( 
\left[ 0,R\right] \right) \right\} =\max \left\{ \left\Vert y-x\right\Vert
:\left( F_{\Gamma _{f}}^{\left\Vert \cdot \right\Vert }\right) ^{-1}\left( %
\left[ 0,R\right] \right) \right\} \\
&=&\max \left\{ \left\Vert y-x\right\Vert :\left( \Gamma _{f}\right) ^{\rho
_{R}}\right\} =F_{\left( \Gamma _{f}\right) ^{\rho _{R}}}^{\left\Vert \cdot
\right\Vert }\left( y\right) ,
\end{eqnarray*}%
which shows that $f_{R}$ is the farthest distance function to the $\rho _{R}$%
-polar of the unique strongly convex set $C$ with respect to $R$ such that $%
F_{C}^{\left\Vert \cdot \right\Vert }=f.$ Consequently, $f_{RR}:=\left(
f_{R}\right) _{R}$ is the farthest distance function to the $\rho _{R}$%
-polar of the unique set $Y$ such that $F_{Y}=f_{R}.$ By Proposition \ref%
{farth polar}, we have $Y=C^{\rho _{R}},$ and hence, using Theorem \ref{2nd
polar} we obtain%
\begin{equation*}
f_{RR}=F_{Y^{\rho _{R}}}^{\left\Vert \cdot \right\Vert }=F_{C^{\rho _{R}\rho
_{R}}}^{\left\Vert \cdot \right\Vert }=F_{C}^{\left\Vert \cdot \right\Vert
}=f.
\end{equation*}
\end{proof}

\bigskip

Translating Example \ref{singl}, with $c:=0,$ into the farthest functions
language, one gets the following example, which can also be easily obtained
by direct computation.

\begin{example}
For $R>0,$ one has $\left\Vert \cdot \right\Vert _{R}=\left\Vert
.\right\Vert +R$ and, as a consequence, $\left( \left\Vert .\right\Vert
+R\right) _{R}=\left\Vert .\right\Vert .$
\end{example}

Similarly, the farthest functions version of Example \ref{balls} yields our
final example.

\begin{example}
For $R>r\geq 0,$ one has $\left( \left\Vert .\right\Vert +r\right)
_{R}=\left\Vert .\right\Vert +R-r.$
\end{example}

To conclude, we address a question raised by an anonymous reviewer:\ When is
the farthest function strongly convex? Using the following simple result,
one obtains an immediate answer to this question.

\begin{proposition}
\label{no L sc}No real-valued function on a Euclidean space is both
Lipschitz and strongly convex.
\end{proposition}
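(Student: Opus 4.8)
The plan is to exploit the fact that a strongly convex function grows at least quadratically, which is incompatible with linear (Lipschitz) growth. Recall from the Introduction that $f\colon\mathbb{R}^n\to\mathbb{R}$ strongly convex means it is $\alpha$-convex for some $\alpha>0$, i.e.\ $f-\alpha\left\Vert\cdot\right\Vert^2$ is convex. In particular $f$ itself is convex, being the sum of the convex functions $f-\alpha\left\Vert\cdot\right\Vert^2$ and $\alpha\left\Vert\cdot\right\Vert^2$; and since it is finite-valued on all of $\mathbb{R}^n$, it is continuous with $\partial f(x)\neq\emptyset$ for every $x$.

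First I would fix an arbitrary $x_0\in\mathbb{R}^n$ and pick some $x^\ast\in\partial f(x_0)$. Since $x\mapsto\alpha\left\Vert x-x_0\right\Vert^2$ is differentiable with vanishing gradient at $x_0$, the same $x^\ast$ lies in $\partial\bigl(f-\alpha\left\Vert\cdot-x_0\right\Vert^2\bigr)(x_0)$, and the subgradient inequality for this convex function gives the global bound
\begin{equation*}
f(y)\ \ge\ f(x_0)+\left\langle x^\ast,\,y-x_0\right\rangle+\alpha\left\Vert y-x_0\right\Vert^2\qquad\text{for all }y\in\mathbb{R}^n .
\end{equation*}
Alternatively, one can dispense with subdifferentials by restricting $f$ to an arbitrary line through $x_0$: the restriction is a one-variable $\alpha$-convex function, and the right derivative of its convex part at $x_0$ supplies the same quadratic minorant.

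Next I would invoke the Lipschitz hypothesis $f(y)\le f(x_0)+L\left\Vert y-x_0\right\Vert$, with $L$ the Lipschitz constant, together with $\left\langle x^\ast,y-x_0\right\rangle\ge-\left\Vert x^\ast\right\Vert\left\Vert y-x_0\right\Vert$, to deduce
\begin{equation*}
\alpha\left\Vert y-x_0\right\Vert^2\ \le\ \bigl(L+\left\Vert x^\ast\right\Vert\bigr)\left\Vert y-x_0\right\Vert\qquad\text{for all }y ,
\end{equation*}
which fails as soon as $\left\Vert y-x_0\right\Vert>(L+\left\Vert x^\ast\right\Vert)/\alpha$. Choosing such a $y$ — possible since $\alpha>0$ and $L,x_0,x^\ast$ are fixed — yields the desired contradiction.

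I do not expect a genuine obstacle; the only points needing care are the standard convex-analysis facts that $\partial f(x_0)$ is nonempty and that a subgradient of $f$ at $x_0$ is also a subgradient of $f-\alpha\left\Vert\cdot-x_0\right\Vert^2$ at $x_0$ (both avoidable via the one-variable reduction), together with making explicit that the statement is understood relative to the $\alpha$-convexity convention, $\left\Vert\cdot\right\Vert$ being the Euclidean norm.
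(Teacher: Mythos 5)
Your argument is correct, but it takes a genuinely different route from the paper's. You fix one point $x_{0}$, pick $x^{\ast }\in \partial f\left( x_{0}\right) $, and derive the global quadratic minorant $f\left( y\right) \geq f\left( x_{0}\right) +\left\langle x^{\ast },y-x_{0}\right\rangle +\alpha \left\Vert y-x_{0}\right\Vert ^{2}$ (valid because $f-\alpha \left\Vert \cdot -x_{0}\right\Vert ^{2}$ differs from $f-\alpha \left\Vert \cdot \right\Vert ^{2}$ by an affine function, hence is convex, and the subtracted quadratic has zero gradient at $x_{0}$), which contradicts the linear upper bound $f\left( y\right) \leq f\left( x_{0}\right) +L\left\Vert y-x_{0}\right\Vert $ once $\left\Vert y-x_{0}\right\Vert $ is large. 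The paper instead takes subgradients $x^{\ast }\in \partial f\left( x\right) $, $y^{\ast }\in \partial f\left( y\right) $ at two arbitrary points, bounds $\left\langle x-y,x^{\ast }-y^{\ast }\right\rangle $ above by $2L\left\Vert x-y\right\Vert $ (Lipschitz continuity forces $\left\Vert x^{\ast }\right\Vert ,\left\Vert y^{\ast }\right\Vert \leq L$) and below by $2\alpha \left\Vert x-y\right\Vert ^{2}$ using the strong monotonicity of the subdifferential of an $\alpha $-convex function cited from Vial, and concludes that $\mathbb{R}^{n}$ would have diameter at most $\frac{L}{\alpha }$, which is absurd. Your approach is more self-contained, needing only the ordinary subgradient inequality (or your one-variable reduction) plus the easy sum-rule/affine-difference observations you flag, whereas the paper's is slightly more compact by leaning on the cited monotonicity result; both exploit the same tension between quadratic growth from strong convexity and linear growth from Lipschitz continuity, so either proof is acceptable.
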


\begin{proof}
Assume that $f:\mathbb{R}^{n}\rightarrow \mathbb{R}$ is both Lipschitz with
constant $L$ and $\alpha $-convex, and let $x,y\in \mathbb{R}^{n}.$ Take $%
x^{\ast }\in $ $\partial f\left( x\right) $ and $y^{\ast }\in \partial
f\left( y\right) .$ Since the Lipschitz condition implies that $\left\Vert
x^{\ast }\right\Vert \leq L$ and $\left\Vert y^{\ast }\right\Vert \leq L,$
we have%
\begin{equation}
\left\langle x-y,x^{\ast }-y^{\ast }\right\rangle \leq \left\Vert
x-y\right\Vert \left\Vert x^{\ast }-y^{\ast }\right\Vert \leq \left\Vert
x-y\right\Vert \left( \left\Vert x^{\ast }\right\Vert +\left\Vert y^{\ast
}\right\Vert \right) \leq 2L\left\Vert x-y\right\Vert .  \label{Lipsch}
\end{equation}%
On the other hand, by \cite[Proposition 4.10]{V83}, the subdifferential
operator is strongly monotone, namely, we have $\left\langle x-y,x^{\ast
}-y^{\ast }\right\rangle \geq 2\alpha \left\Vert x-y\right\Vert ^{2}.$
Combining this inequality with (\ref{Lipsch}), we obtain $\left\Vert
x-y\right\Vert \leq \frac{L}{\alpha },$ which means that the diameter of $%
\mathbb{R}^{n}$ is less than or equal to $\frac{L}{\alpha },$ an absurd
conclusion.
\end{proof}

\begin{corollary}
Let $\Vert \cdot \Vert $ be any norm in $\mathbb{R}^{n}.$ For any nonempty
compact convex set $C\subseteq \mathbb{R}^{n}$, the function $%
F_{C}^{\left\Vert \cdot \right\Vert }$ is not strongly convex.
\end{corollary}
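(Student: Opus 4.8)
The plan is to reduce the statement directly to Proposition~\ref{no L sc}. Since $C$ is nonempty and compact, the maximum defining $F_{C}^{\left\Vert \cdot \right\Vert }\left( x\right) =\max_{c\in C}\left\Vert x-c\right\Vert $ is attained for every $x$, so $F_{C}^{\left\Vert \cdot \right\Vert }$ is a genuinely real-valued function on $\mathbb{R}^{n}$. It then suffices to check that it is Lipschitz in the Euclidean sense, since Proposition~\ref{no L sc} is phrased for real-valued functions that are Lipschitz with respect to the Euclidean norm (the one underlying the notion of $\alpha$-convexity used there).

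First I would recall, as noted in the introduction, that $F_{C}^{\left\Vert \cdot \right\Vert }$ is Lipschitz with constant $1$ with respect to the norm $\left\Vert \cdot \right\Vert $ itself, being the pointwise maximum of the $1$-Lipschitz functions $x\mapsto \left\Vert x-c\right\Vert $. Then, invoking the equivalence of norms on $\mathbb{R}^{n}$ exactly as in the proof of Theorem~\ref{char farthest}, I would pick $k>0$ with $k\left\Vert \cdot \right\Vert \leq \left\Vert \cdot \right\Vert _{2}$; this gives, for all $x,y\in \mathbb{R}^{n}$, the estimate $\lvert F_{C}^{\left\Vert \cdot \right\Vert }\left( x\right) -F_{C}^{\left\Vert \cdot \right\Vert }\left( y\right) \rvert \leq \left\Vert x-y\right\Vert \leq \tfrac{1}{k}\left\Vert x-y\right\Vert _{2}$, so that $F_{C}^{\left\Vert \cdot \right\Vert }$ is Lipschitz with constant $\tfrac{1}{k}$ in the Euclidean metric.

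Finally, I would apply Proposition~\ref{no L sc}: a real-valued function on a Euclidean space cannot be simultaneously Lipschitz and strongly convex, and since $F_{C}^{\left\Vert \cdot \right\Vert }$ has just been shown to be Lipschitz, it cannot be strongly convex.

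There is essentially no difficult step here. The only points requiring a moment's care are the passage from the $\left\Vert \cdot \right\Vert $-Lipschitz bound to the Euclidean one (so that Proposition~\ref{no L sc}, stated for the Euclidean structure, genuinely applies) and the remark that compactness of $C$ is what makes $F_{C}^{\left\Vert \cdot \right\Vert }$ finite-valued; beyond that the corollary is an immediate consequence of the preceding proposition.
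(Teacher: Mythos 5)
Your proposal is correct and follows essentially the same route as the paper: the paper also observes that $F_{C}^{\left\Vert \cdot \right\Vert }$ is Lipschitz (as noted in the Introduction) and that Lipschitz behaviour is independent of the norm by equivalence of norms on $\mathbb{R}^{n}$, and then invokes Proposition~\ref{no L sc}. You merely make the constant $\tfrac{1}{k}$ explicit, which is a harmless elaboration of the same argument.
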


\begin{proof}
As observed in the Introduction, $F_{C}^{\left\Vert \cdot \right\Vert }$ is
Lipschitz, so the result follows from Proposition \ref{no L sc} after
observing that the Lipschitz character of a function is independent of the
norm, since all norms on $\mathbb{R}^{n}$ are equivalent.
\end{proof}

\bigskip

In spite of the preceding corollary, we have the following simple result.

\begin{proposition}
Let $\Vert \cdot \Vert $ be any norm in $\mathbb{R}^{n}.$ For any nonempty
compact convex set $C\subseteq \mathbb{R}^{n}$, all the nonempty sublevel
sets of $F_{C}^{\left\Vert \cdot \right\Vert }$ are strongly convex, namely,
for every $R>0$ the set $\left( F_{C}^{\left\Vert \cdot \right\Vert }\right)
^{-1}\left( \left[ 0,R\right] \right) $ is strongly convex with respect to $%
R.$
\end{proposition}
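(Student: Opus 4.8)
The plan is essentially to unravel the definition of the sublevel set and recognize it as an intersection of balls of radius $R$. First I would note that, for $x\in\mathbb{R}^{n}$,
\begin{equation*}
F_{C}^{\left\Vert \cdot \right\Vert }\left( x\right) \leq R
\iff \max_{c\in C}\left\Vert x-c\right\Vert \leq R
\iff \left\Vert x-c\right\Vert \leq R \ \ \forall c\in C
\iff x\in \dbigcap\nolimits_{c\in C}B\left( c,R\right) .
\end{equation*}
Hence $\left( F_{C}^{\left\Vert \cdot \right\Vert }\right) ^{-1}\left( \left[ 0,R\right] \right) =\dbigcap\nolimits_{c\in C}B\left( c,R\right) $; in fact this is exactly the set $C^{\rho _{R}}$ appearing in $(\ref{polar})$.

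Since $C$ is nonempty, this is an intersection of a nonempty collection of closed balls with radius $R$, and therefore strongly convex with respect to $R$ directly by the definition of strong convexity given in the Introduction (or, equivalently, by the observation immediately following $(\ref{polar})$ that $C^{\rho _{R}}$ is strongly convex with respect to $R$). That finishes the proof.

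There is no genuine obstacle here: the only point worth stating is the identification of the $R$-sublevel set of $F_{C}^{\left\Vert \cdot \right\Vert }$ with $\bigcap_{c\in C}B\left( c,R\right) =C^{\rho _{R}}$, after which strong convexity is immediate from the definition. (If one wished, one could also remark that this gives a second proof that $F_{C}^{\left\Vert \cdot \right\Vert }$ is determined by any one of its sublevel sets, in the spirit of the remark following $(\ref{funct eq})$, but this is not needed for the statement.)
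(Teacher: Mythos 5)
Your proof is correct and follows essentially the same route as the paper: both identify the sublevel set $\left( F_{C}^{\left\Vert \cdot \right\Vert }\right) ^{-1}\left( \left[ 0,R\right] \right)$ with $C^{\rho _{R}}=\bigcap_{c\in C}B\left( c,R\right)$ and then invoke the observation following (\ref{polar}) (equivalently, the definition of strong convexity) to conclude. Your version merely spells out the elementary equivalences behind that identification, which the paper leaves implicit.
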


\begin{proof}
If $\left( F_{C}^{\left\Vert \cdot \right\Vert }\right) ^{-1}\left( \left[
0,R\right] \right) $ is nonempty, the equality $\left( F_{C}^{\left\Vert
\cdot \right\Vert }\right) ^{-1}\left( \left[ 0,R\right] \right) =C^{\rho
_{R}}$ shows that it is strongly convex with respect to $R.$
\end{proof}

\bigskip

\noindent \textbf{Acknowledgments. }I have been partially supported by Grant
PID2022-136399NB-C22 from MICINN, Spain, and ERDF, \textquotedblright A way
to make Europe", European Union. I am very grateful to an anonymous referee
for his/her thorough review, insightful comments and interesting suggestions.

\end{document}